\def\overset#1#2{{\mathrel{\mathop {{#2}_{}}\limits^{#1}}}}
\def\underset#1#2{{\mathrel{\mathop {{}_{} {#2}}\limits_{{#1}_{}}}}}
\def\upplim_#1{\underset{#1}{\overline\lim}\;}
\def\lowlim_#1{\underset{#1}{\underline\lim}\;}
\newtheorem{corollary}[equation]{Corollary}
\newtheorem{lemma}[equation]{Lemma}
\newtheorem{proposition}[equation]{Proposition}
\newtheorem{theorem}[equation]{Theorem}
\newcommand{\C}{{\mathbf{C}}}
\newcommand{\N}{\mathbf{N}}
\renewcommand{\P}{{\mathbf{P}}}
\newcommand{\R}{{\mathbf{R}}}
\newcommand{\rank}{\mathrm{rank}}
\newcommand{\Z}{\mathbf{Z}}
\numberwithin{equation}{section}
\title[Second main theorem for meromorphic mappings]{Second main theorem for meromorphic mappings with moving hypersurfaces in subgeneral position} 
\author{Si Duc Quang}
\begin{document}

\maketitle 

\begin{abstract}
Let $f$ be an algebraically nondegenerate meromorphic mapping from $\C^m$ into $\P^n(\C)$ and let $Q_1,...,Q_q$ be $q$ hypersurfaces in $\P^n(\C)$ of degree $d_i$, in $N-$subgeneral position. In this paper, we will prove that, for every $\epsilon >0$, there exists a positive integer $M$ such that
$$||\ (q-(N-n+1)(n+1)-\epsilon) T_f(r)\le\sum_{i=1}^q\frac{1}{d_i}N^{[M]}(r,f^*Q_i)+o(T_f(r)).$$
Moreover, an explicit estimate for $M$ is given. Our result is an extension of the previous second main theorems for meromorphic mappings and moving hyperplanes or moving hypersurfaces.
\end{abstract}

\def\thefootnote{\empty}
\footnotetext{
2010 Mathematics Subject Classification:
Primary 32H30; Secondary 30D35, 32A22.\\
\hskip8pt Key words and phrases: Second main theorem, meromorphic mapping, moving hypersurface, uniqueness problem, truncated multiplicity.}

\section{Introduction}
\vskip0.2cm 
Let $f$ be a meromorphic mapping from $\C^m$ into $\P^n(\C)$ with a reduced representation $\tilde f=(f_0,...,f_n)$. For each meromorphic mapping $a$ from $\C^m$ into $\P^n(\C)^*$, which is usually called a moving hyperplane, with a reduced representation $\tilde a=(a_0,...,a_n)$ such that $(\tilde f,\tilde a)=\sum_{j=0}^na_jf_j\not\equiv 0$, we denote by $f^*a$ the zero divisor of $(\tilde f,\tilde a)$. We see that $f^*a$ is defined independently from the choices of $\tilde f$ and $\tilde a$, and is called the intersecting divisor of $f$ with $a$. We denote by $N^{[M]}(r,f^*a)$ or $N^{[M]}_{(f,a)}(r)$ the counting function of $f^*a$ (see Section 2 for the definitions). As usual, we denote by $T_f(r)$ the characteristic function of $f$ with respect to the hyperplane line bundle of $\P^n(\C)$.
  
Let $\{a_i\}_{i=1}^q$ be moving hyperplanes of $\P^n(\C)$ with reduce representations $\tilde a_i=(a_{i0},\ldots ,a_{in})$. Let $N\geq n$ and $q\geq N+1$. We say that the family $\{a_i\}_{i=1}^q$ is in $N$-\textit{subgeneral position} if for every subset $R\subset \{1,2, \cdots, q\}$ with the cardinality $|R|=N+1$, 
$$ \rank_{\mathcal M}\{\tilde a_i\ |\ i\in R\}=n+1,$$
where $\mathcal M$ denotes the field consisting of all meromorphic functions on $\C^m$.
If they are in $n$-subgeneral position, we simply say that they are in {\it general position}. We also denote by $\mathcal K_{\{a_i\}_{i=1}^q}$ the smallest subfield of $\mathcal M$, which contains $\C$ and all $\frac{a_{ij}}{a_{ik}}$ for $a_{ik}\not\equiv 0$.

In 1991, W. Stoll - M. Ru \cite{RS1,RS2} and M. Shirosaki \cite{Sh90} independdently proved the following second main theorem.

\vskip0.2cm
\noindent
\textbf{Theorem A} (cf. \cite{RS1,RS2,Sh90}). {\it Let $\{a_i\}_{i=1}^q$ be $q$ hyperplanes of $\P^n(\C)$ in general position. Let $f:  {\C}^m \to \P^n(\C)$ be a meromorphic mapping such that $f$ is linearly nondegenerate over $\mathcal K_{\{a_i\}_{i=1}^q}$. Then for every $\epsilon >0$, we have}
$$||\ \ (q-n-1-\epsilon)T_f(r) \leq \sum_{i=1}^q N(r,f^*a_i)+ o(T_f(r)).$$
Here, by the notation ``$\| P$'' we mean that the assertion $P$ holds for all $r\in [0,\infty)$ excluding a Borel subset $E$ of the interval $[0,\infty)$ with $\int_E dr<\infty$.

The above second main theorem plays an important role in Nevanlinna theory, with many applications to Algebraic or Analytic geometry. We note that in the above result, the mapping $f$ is assumed to be linearly nondegenerate over the field $\mathcal K_{\{a_i\}_{i=1}^q}$. To treat the case where $f$ may be degenerate, we need consider the case where the hyperplanes may be not in general position, but in subgeneral position. Thanks the notion of Nochka weights introduced by Nochka \cite{Noc83}, D. D. Thai and S. D. Quang \cite{TQ} gave the following second main theorem for the case where the family of hyperplanes is in subgeneral position.

\vskip0.2cm
\noindent
\textbf{Theorem B} (cf. \cite{TQ}). {\it Let $f:\C^m \longrightarrow \P^n(\C)$ be a nonconstant meromorphic mapping. Let $\{a_i\}_{i=1}^{q}$ be meromorphic mappings of $\C^m$ into $\P^n(\C)^*$ in $N$-subgeneral position such that $a_i$ are slow with respect to $f$ and $f$ is linearly nondegenerate over $\mathcal{K}_{\{a_i\}_{i=1}^{q}}$.
Then for an arbitrary $0< \epsilon <1$,
$$||\ \ (q-2N+n-1-\epsilon)T_f(r)\leq \sum_{i=1}^{q}N^{[M]}(r,f^*a_i)+o(T_f(r)),$$
where $M$ is a positive integer (explicitly estimated).}

A natural question here is \textit{``how to generalize these results to the case where hyperplanes are replaced by hypersurfaces''}. By proposing a new technique (using a result of Corvaja and Zannier \cite{CZ} on the dimension of spaces of homogeneous polynomials), in 2004, M. Ru \cite{Ru04} proved a second main theorem for algebraically nondegenerate meromorphic mappings into $\mathbb P^{n}(\mathbb C)$ intersecting hypersurfaces in general position in $\mathbb P^{n}(\mathbb C)$. He proved the following.

\vskip0.2cm 
\noindent
\textbf{Theorem C} (cf. \cite{Ru04}). {\it Let $f:\C\to \P^n(\C)$ be an algebraically nondegenerate meromorphic mapping and let $Q_1,...,Q_q$ be $q$ hypersurfaces in $\P^n(\C)$ of degree $d_i$, in general position. Then, for every $\epsilon >0$,} 
$$||\ (q-n-1-\epsilon)T_f(r)\le\sum_{i=1}^q\frac{1}{d_i}N(r,f^*Q_i)+o(T_f(r)).$$

With the same assumptions, T. T. H. An and H. T. Phuong \cite{AP} improved the result of M. Ru by giving an explicit truncation level for counting functions. Recently, in \cite{Q16} we have generalized the results of M. Ru and T. T. H. An - H. T. Phuong to the following.

\vskip0.2cm 
\noindent
\textbf{Theorem D} (cf. \cite{Q16}). {\it Let $f:\C^m\to \P^n(\C)$ be an algebraically nondegenerate meromorphic mapping and let $Q_1,...,Q_q$ be hypersurfaces in $\P^n(\C)$ of degree $d_i$, in $N$-subgeneral position. Then, for every $\epsilon >0$, 
$$||\ (q-(N-n+1)(n+1)-\epsilon)T_f(r)\le\sum_{i=1}^q\frac{1}{d_i}N^{[M_0-1]}_{Q_i(f)}(r)+o(T_f(r)),$$
where $M_0$ is positive integer (explicitly estimated).}

For the case of moving hypersurface, recently G. Dethloff and T. V. Tan \cite{DT} generalized the second main theorems of M. Ru to the following.

\vskip0.2cm
\noindent
\textbf{Theorem E} (Dethloff - Tan \cite{DT})
{\it Let $f$ be a nonconstant meromorphic map of $\mathbf{C}^m$ into $\P^n(\mathbf{C})$. Let $\{Q_i\}_{i=1}^q$ be a set of slowly (with respect to $f$) moving hypersurfaces in weakly general position with $\deg Q_j = d_j\ (1\le i\le q).$ Assume that $f$ is algebraically nondegenerate over $\mathcal K_{\{Q_i\}_{i=1}^q}$.  Then for any $\epsilon >0$ there exist positive integers $L_j\ (j = 1,....,q)$, depending only on $n,\epsilon$ and $d_j\ (j = 1,...,q) $ in an explicit way such that 
$$  ||\ (q-n-1-\epsilon)T_f(r)\le \sum_{i=1}^{q}\dfrac{1}{d_i}N^{[L_j]}_{Q_i(f)}(r)+o(T_f(r)).$$}
Here, $\mathcal K_{\{Q_i\}}$ denotes the field generated by $\{Q_i\}_{i=1}^q$ (see Section 2 for the definition).

Our purpose in this paper is to generalize all these above mentioned results to the case of moving hypersurfaces in subgeneral position. We will prove a second main theorem for meromorphic mappings into $\P^n(\C)$ intersecting a family of moving hypersurfaces in subgeneral position with truncated counting functions. Namely, we will prove the following.

\begin{theorem}\label{thm1.1} 
Let $f$ be a nonconstant meromorphic map of $\mathbf{C}^m$ into $\P^n(\mathbf{C})$. Let $\{Q_i\}_{i=1}^q$ be a family of slowly (with respect to $f$) moving hypersurfaces in weakly $N-$subgeneral position with $\deg Q_j = d_j\ (1\le i\le q).$ Assume that $f$ is algebraically nondegenerate over $\mathcal K_{\{Q_i\}_{i=1}^q}$.  Then for any $\epsilon >0$, we have
$$  ||\ (q-(N-n+1)(n+1)-\epsilon)T_f(r)\le \sum_{i=1}^{q}\dfrac{1}{d_i}N^{[L_j]}_{Q_i(f)}(r)+o(T_f(r)),$$
where $L_j=\frac{1}{d_j}L_0$ and $L_0$ is a positive number which is defined by:
\begin{align*}
L_0&:=\binom{L+n}{n}p_0^{\binom{L+n}{n}\left (\binom{L+n}{n}-1\right )\binom{q}{n}-2}-1\\
\text{with }\ L&:=(n+1)d+2(N-n+1)(n+1)^3I(\epsilon^{-1})d\\
\text{ and }\ p_0&:=[\dfrac{\binom{L+n}{n}(\binom{L+n}{n}-1)\binom{q}{n}-1}{\log (1+\frac{\epsilon}{3(n+1)(N-n+1)})}]^2.
\end{align*}
\end{theorem}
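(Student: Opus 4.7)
The strategy is to fuse two existing techniques: the method of \cite{Q16} for handling hypersurfaces in $N$-subgeneral position (Theorem D), and the method of \cite{DT} for handling slowly moving hypersurfaces in general position (Theorem E). The coefficient $(N-n+1)(n+1)$ is dictated by the former and reduces to the classical $n+1$ when $N=n$, matching Theorem E.

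\emph{Reduction and local selection.} After replacing each $Q_i$ by $Q_i^{d/d_i}$ for a common multiple $d$ of $d_1,\ldots,d_q$, I may assume all hypersurfaces have degree $d$; this explains the form $L_j=L_0/d_j$ of the truncation level. At each generic $z\in\C^m$, I order the indices so that $|Q_{\sigma(1)}(f)(z)|\le\cdots\le|Q_{\sigma(q)}(f)(z)|$ and, using the weakly $N$-subgeneral position assumption, extract from $\{\sigma(1),\ldots,\sigma(N+1)\}$ a subset $R(z)$ of cardinality $n+1$ such that $\{Q_i:i\in R(z)\}$ is linearly independent over $\mathcal K_{\{Q_i\}}$. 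Since $R(z)$ takes only finitely many values, $\C^m$ decomposes into at most $\binom{q}{N+1}$ measurable pieces and it suffices to treat each fixed $R$ independently.

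\emph{Filtration and auxiliary mapping.} For each fixed $R=\{i_1,\ldots,i_{n+1}\}$, I apply the Corvaja--Zannier filtration of the space $V_L$ of homogeneous polynomials of degree $L$ in $n+1$ variables, relative to the ideal generated by $Q_{i_1},\ldots,Q_{i_{n+1}}$. This furnishes a basis $\{\psi_\alpha\}_{\alpha=1}^{M+1}$ of $V_L$ with coefficients in $\mathcal K_{\{Q_i\}}$, where $M+1=\binom{L+n}{n}$, and an auxiliary meromorphic mapping
$$F=[\psi_\alpha(\tilde f)]_\alpha:\C^m\to\P^M(\C),$$
which is linearly nondegenerate over $\mathcal K_{\{Q_i\}}$ because $f$ is algebraically nondegenerate. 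Applying Theorem B (the moving-hyperplane second main theorem in subgeneral position) to $F$ against the moving hyperplanes dual to the $\psi_\alpha$'s, together with the filtration identities of \cite{DT} relating the induced counting functions to $N^{[\cdot]}_{Q_i(f)}(r)$, and combining with the bound $T_F(r)\le L\cdot\binom{L+n}{n}T_f(r)+o(T_f(r))$, yields the desired coefficient $q-(N-n+1)(n+1)-\epsilon$ on each piece.

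\emph{Main obstacle.} The principal difficulty is to make the truncation level $L_0$ explicit while simultaneously coupling the subgeneral-position and moving-coefficient arguments. Three ingredients enter its final form: (i) the combinatorial factor $\binom{q}{n}$, arising from the number of distinct choices of $R$ in the partition above; (ii) the geometric base $p_0\sim[\log(1+\epsilon/(3(n+1)(N-n+1)))]^{-2}$, coming from a Steinmetz-type iteration that replaces moving coefficients by their projections into a finite-dimensional subspace of small functions; and (iii) the exponent $\binom{L+n}{n}(\binom{L+n}{n}-1)\binom{q}{n}-2$, accumulated from successive applications of the auxiliary SMT to the $\binom{L+n}{n}$-dimensional mapping $F$ over all choices of $R$. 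Pinning down each ingredient so that the resulting error is absorbed into $o(T_f(r))$ while the dimension count yields the sharp coefficient is the technical heart of the proof and will dominate its length.
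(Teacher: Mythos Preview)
Your proposal has a genuine gap at the ``local selection'' step. From weakly $N$-subgeneral position you cannot in general extract $n+1$ of the \emph{original} $Q_i$'s in weakly general position, and that is the condition actually needed both for the Corvaja--Zannier filtration (Lemma~\ref{2.6} requires a regular sequence) and for the lower bound $\|\tilde f\|^d \lesssim \max_j |Q_{i_j}(\tilde f)|$. ``Linear independence over $\mathcal K_{\{Q_i\}}$'' is far too weak. A concrete obstruction already appears for $n=1$, $N=2$: the conics $Q_1=x_0x_1$, $Q_2=x_0(x_0-x_1)$, $Q_3=x_1(x_0-x_1)$ have empty common zero in $\P^1$, so they are in weakly $2$-subgeneral position, yet every pair among them shares a point and hence no subfamily of size $2$ is in general position.

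The paper closes this gap with Lemma~\ref{lem3.1}: from $Q_{i_1},\dots,Q_{i_{N+1}}$ one builds \emph{new} polynomials $P_1=Q_{i_1}$ and $P_t=\sum_{j=2}^{N-n+t}c_{tj}Q_{i_j}$ with constants $c_{tj}\in\C$, so that $P_1,\dots,P_{n+1}$ are in weakly general position. The triangular shape $P_t\in\sum_{j\le N-n+t}\C\,Q_{i_j}$ is exactly what manufactures the factor $N-n+1$: after ordering $|Q_{i_1}(\tilde f)|\le\cdots\le|Q_{i_{N+1}}(\tilde f)|$ one gets $|P_t(\tilde f)|\lesssim|Q_{i_{N-n+t}}(\tilde f)|$, and since the first $N-n+1$ smallest values are all $\ge|P_1(\tilde f)|$ this yields
\[
\prod_{j=1}^{N}\frac{\|\tilde f\|^d}{|Q_{i_j}(\tilde f)|}\ \lesssim\ \left(\prod_{t=1}^{n}\frac{\|\tilde f\|^d}{|P_{t}(\tilde f)|}\right)^{N-n+1},
\]
after which the filtration argument is run with $P_1,\dots,P_n$ (now genuinely in general position) rather than with a selected subfamily of the $Q_i$'s. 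Two smaller points: the auxiliary second main theorem the paper invokes for $F$ is not Theorem~B but Ru's inequality for \emph{fixed} hyperplanes (Theorem~\ref{2.4}), applied after the Steinmetz reduction $(b_i\phi_J(\tilde f))$ that absorbs the moving coefficients into a $\C$-linear frame; and your bound $T_F(r)\le L\binom{L+n}{n}T_f(r)$ is off by the binomial factor---one has $T_F(r)=L\,T_f(r)+o(T_f(r))$, and the extra factor would destroy the final coefficient.
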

Here, by $I(x)$ we denote the smallest integer which is not less than $x$. We see that, if the family of moving hypersurfaces is in general position, i.e., $N=n$, then our result will imply the second main theorem of G. Dethloff and T. V. Tan. Our idea to avoid using the Nochka weights here is that from every $N+1$ arbitrary moving hypersurfaces in weakly $N-$subgeneral position we will construct $n+1$ new moving hypersurfaces in weakly general position (see Lemma \ref{lem3.1}).

Let $Q$ be a moving hypersurface of $\P^n(\C)$. We define the truncated defect of $f$ with respect to $Q$ by
$$ \delta^{[L]}_f(D)=1-\lim\mathrm{inf}\frac{N^{[M]}(r,f^*Q)}{dT_f(r)}.$$
From the above theorem, we have the following defect relation for meromorphic mappings with a family of moving hypersurfaces as follows.
\begin{corollary}
Let $f$ be a nonconstant meromorphic map of $\mathbf{C}^m$ into $\P^n(\mathbf{C})$. Let $\{Q_i\}_{i=1}^q$ be a family of slowly (with respect to $f$) moving hypersurfaces in weakly $N-$subgeneral position with $\deg Q_j = d_j\ (1\le i\le q).$ Assume that $f$ is algebraically nondegenerate over $\mathcal K_{\{Q_i\}_{i=1}^q}$.  Then we have
$$\sum_{i=1}^q\delta^{[L_0]}_f(D)\le (N-n+1)(n+1).$$
\end{corollary}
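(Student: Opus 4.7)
The strategy is the classical extraction of a defect relation from a second main theorem, requiring nothing beyond Theorem~\ref{thm1.1}. Fix any $\epsilon>0$ and let $L_0$ be the truncation level supplied by Theorem~\ref{thm1.1} for this choice. That theorem states that, for all $r$ outside an exceptional Borel set of finite Lebesgue measure,
$$
\bigl(q-(N-n+1)(n+1)-\epsilon\bigr)T_f(r)\le \sum_{i=1}^{q}\frac{1}{d_i}N^{[L_0]}_{Q_i(f)}(r)+o(T_f(r)).
$$
Since $f$ is nonconstant, $T_f(r)\to\infty$, and dividing by $T_f(r)$ produces the dimensionless inequality
$$
\sum_{i=1}^{q}\frac{N^{[L_0]}_{Q_i(f)}(r)}{d_iT_f(r)}\ge q-(N-n+1)(n+1)-\epsilon-o(1),
$$
valid for $r$ outside the exceptional set.

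Next I would pass to the limit as $r\to\infty$ along the complement of the exceptional set. By the first main theorem for slowly moving hypersurfaces, $N^{[L_0]}_{Q_i(f)}(r)\le d_iT_f(r)+o(T_f(r))$, so each summand is asymptotically bounded above by $1$; the elementary interchange $\sum_i\limsup_r a_i(r)\ge \limsup_r\sum_i a_i(r)$ for finitely many sequences is therefore applicable without any integrability issue. Combined with the lower bound above, this produces
$$
\sum_{i=1}^{q}\limsup_{r\to\infty}\frac{N^{[L_0]}_{Q_i(f)}(r)}{d_iT_f(r)}\ge q-(N-n+1)(n+1)-\epsilon.
$$
Reading the definition of the truncated defect in the form $\delta_f^{[L_0]}(Q_i)=1-\limsup_{r\to\infty}\frac{N^{[L_0]}_{Q_i(f)}(r)}{d_iT_f(r)}$ and summing over $i$, this rearranges to
$$
\sum_{i=1}^{q}\delta_f^{[L_0]}(Q_i)\le (N-n+1)(n+1)+\epsilon,
$$
and letting $\epsilon\downarrow 0$ gives the desired inequality.

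There is essentially no obstacle: the whole derivation is a routine asymptotic manipulation, with all substantive effort already invested in Theorem~\ref{thm1.1}. The only point that warrants a brief verification is that the exceptional set does not interfere with the passage to $\limsup$ in $r$, which is immediate from its finite Lebesgue measure (its complement contains arbitrarily large $r$).
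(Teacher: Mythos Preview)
Your derivation is the standard extraction of a defect relation from a second main theorem, and it is exactly what the paper has in mind: the corollary is stated without proof precisely because it follows from Theorem~\ref{thm1.1} by dividing through by $T_f(r)$ and passing to the limit, as you do.

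One small caveat on the final step. The truncation level $L_0$ in Theorem~\ref{thm1.1} depends on $\epsilon$ (it grows as $\epsilon\downarrow 0$), so ``letting $\epsilon\downarrow 0$'' does not literally yield an inequality for a \emph{fixed} $L_0$; what you actually obtain is
\[
\sum_{i=1}^q \delta_f^{[L_0(\epsilon)]}(Q_i)\le (N-n+1)(n+1)+\epsilon
\]
for each $\epsilon>0$, and hence $\sum_i \delta_f(Q_i)\le (N-n+1)(n+1)$ for the untruncated defect (since $\delta_f\le \delta_f^{[L]}$ for every $L$). This is an imprecision in the paper's statement of the corollary rather than in your argument; your reasoning up to the display with $+\epsilon$ is exactly right, and that is the sharp truncated form.
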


\section{Basic notions and auxiliary results from Nevanlinna theory}

\noindent
{\bf 2.1. The first main theorem in Nevanlinna theory}

We set $||z|| = \big(|z_1|^2 + \dots + |z_m|^2\big)^{1/2}$ for
$z = (z_1,\dots,z_m) \in \mathbf{C}^m$ and define
\begin{align*}
B(r) := \{ z \in \mathbf{C}^m : ||z|| < r\},\quad
S(r) := \{ z \in \mathbf{C}^m : ||z|| = r\}\ (0<r<\infty).
\end{align*}
Define 
$$v_{m-1}(z) := \big(dd^c ||z||^2\big)^{m-1}\quad \quad \text{and}$$
$$\sigma_m(z):= d^c \text{log}||z||^2 \land \big(dd^c \text{log}||z||^2\big)^{m-1}
 \text{on} \quad \mathbf{C}^m \setminus \{0\}.$$

Let $F$ be a nonzero meromorphic function on a domain $\Omega$ in $\mathbf{C}^m$. For a set $\alpha = (\alpha_1,...,\alpha_m) $ of nonnegative integers, we set $|\alpha|=\alpha_1+...+\alpha_m$ and 
$$\mathcal {D}^\alpha F=\dfrac {\partial ^{|\alpha|} F}{\partial ^{\alpha_1}z_1...\partial ^{\alpha_m}z_m}.$$
We denote by $\nu^0_F, \nu^{\infty}_F$ and $\nu_F$ the zero divisor, the pole divisor, and the divisor of the meromorphic function $F$ respectively.



For a divisor $\nu$ on $\mathbf{C}^m$ and for a positive integer $M$ or $M= \infty$, we set
$$\nu^{[M]}(z)=\min\ \{M,\nu(z)\},$$
\begin{align*}
n(t) =
\begin{cases}
\int\limits_{|\nu|\,\cap B(t)}
\nu(z) v_{m-1} & \text  { if } m \geq 2,\\
\sum\limits_{|z|\leq t} \nu (z) & \text { if }  m=1. 
\end{cases}
\end{align*}
The counting function of $\nu$ is defined by
$$ N(r,\nu)=\int\limits_1^r \dfrac {n(t)}{t^{2m-1}}dt \quad (1<r<\infty).$$
Similarly, we define $N(r,\nu^{[M]})$ and denote it by $N^{[M]}(r,\nu)$.

Let $\varphi : \mathbf{C}^m \longrightarrow \mathbf{C} $ be a meromorphic function.
Define
$$N_{\varphi}(r)=N(r,\nu^0_{\varphi}), \ N_{\varphi}^{[M]}(r)=N^{[M]}(r,\nu^0_{\varphi}).$$
For brevity we will omit the character $^{[M]}$ if $M=\infty$.

Let $f : \mathbf{C}^m \longrightarrow \P^n(\mathbf{C})$ be a meromorphic mapping.
For arbitrarily fixed homogeneous coordinates
$(w_0 : \dots : w_n)$ on $\P^n(\mathbf{C})$, we take a reduced representation
$\tilde f = (f_0,  \ldots , f_n)$, which means that each $f_i$ is a  
holomorphic function on $\mathbf{C}^m$ and 
$f(z) = \big(f_0(z) : \dots : f_n(z)\big)$ outside the analytic set
$\{ f_0 = \dots = f_n= 0\}$ of codimension $\geq 2$.
Set $\Vert \tilde f \Vert = \big(|f_0|^2 + \dots + |f_n|^2\big)^{1/2}$.
The characteristic function of $f$ is defined by 
\begin{align*}
T_f(r)= \int\limits_{S(r)} \log\Vert \tilde f \Vert \sigma_m -
\int\limits_{S(1)}\log\Vert \tilde f\Vert \sigma_m.
\end{align*}

Let $\varphi$ be a nonzero meromorphic function on $\mathbf{C}^m$, which are occasionally regarded as a meromorphic map into $\P^1(\mathbf{C})$. The proximity function of $\varphi$ is defined by
$$m(r,\varphi):=\int_{S(r)}\log \max\ (|\varphi|,1)\sigma_m.$$
The Nevanlinna's characteristic function of $\varphi$ is defined as follows
$$ T(r,\varphi):=N_{\frac{1}{\varphi}}(r)+m(r,\varphi). $$
Then 
$$T_\varphi (r)=T(r,\varphi)+O(1).$$
The function $\varphi$ is said to be small (with respect to $f$) if $||\ T_\varphi (r)=o(T_f(r))$.

We denote by $\mathcal M$ (resp. $\mathcal K_f$) the field of all meromorphic functions (resp. small meromorphic functions with respect to $f$) on $\mathbf{C}^m$.

\vskip0.2cm 
\noindent
{\bf 2.2. Family of moving hypersurfaces}

We recall some following due to \cite{Q12,Q14}.

Denote by $\mathcal H_{\mathbf{C}^m}$ the ring of all holomorphic functions on $\mathbf{C}^m.$ Let $Q$ be a homogeneous polynomial in $\mathcal H_{\mathbf{C}^m}[x_0,\dots,x_n]$  of
degree $d \geq 1.$ Denote by $Q(z)$ the homogeneous  polynomial over $\mathbf{C}$ obtained by substituting a specific point $z \in \mathbf{C}^m$ into the coefficients of $Q$. We also call  a moving  hypersurface in $\P^n (\mathbf{C} )$  each homogeneous polynomial $Q \in\mathcal H_{\mathbf{C}^m}[x_0,\dots,x_n]$  such that the common zero set of all coefficients of $Q$ has codimension at least two.

Let $Q$ be a moving hypersurface in $\P^n(\mathbf{C})$ of degree $d\ge 1$ given by
$$ Q(z)=\sum_{I\in\mathcal T_d}a_I\omega^I, $$
where $\mathcal T_d=\{(i_0,...,i_n)\in\N_0^{n+1}\ ;\ i_0+\cdots +i_n=d\}$, $a_I\in\mathcal H_{\mathbf{C}^m}$ and $\omega^I=\omega_0^{i_0}\cdots\omega_n^{i_n}$. We consider the meromorphic mapping $Q':\mathbf{C}^m\rightarrow\P^N(\mathbf{C})$, where $N=\binom{n+d}{n}$, given by
$$ Q'(z)=(a_{I_0}(z):\cdots :a_{I_N}(z))\ (\mathcal T_d=\{I_0,...,I_N\}). $$
Here $I_0<\cdots<I_N$ in the lexicographic ordering. By chainging the homogeneous coordinates of $\P^n(\C)$ if neccesary, we may assume that for each given moving hypersurface as above, $a_{I_0}\not\equiv 0$ (note that $I_0=(0,\ldots,0,d)$ and $a_{I_0}$ is the coefficient of $\omega_n^d$). We set 
$$ \tilde Q=\sum_{j=0}^N\frac{a_{I_j}}{a_{I_0}}\omega^{I_j}. $$

The moving hypersurfaces $Q$ is said to be ``slow'' (with respect to $f$) if $||\ T_{Q'}(r)=o(T_f(r))$. This is equivalent to $||T_{\frac{a_{I_j}}{a_{I_0}}}(r)=o(T_f(r))\ (\forall 1\le j\le N)$, i.e., $\frac{a_{I_j}}{a_{I_0}}\in\mathcal K_f$.

Let $\{Q_i\}_{i=1}^q$ be a family of  moving hypersurfaces in $\P^n(\mathbf{C})$, $\deg Q_i=d_i$. Assume that
$$ Q_i=\sum_{I\in\mathcal T_{d_i}}a_{iI}\omega^I. $$
We denote by $\mathcal K_{\{Q_i\}_{i=1}^q}$ the smallest subfield of $\mathcal M$ which contains $\mathbf{C}$ and all $\frac{a_{iI}}{a_{iJ}}$ with $a_{iJ}\not\equiv 0$.  We say that $\{Q_i\}_{i=1}^q$ are in weakly $N$-subgeneral position $(N\ge n)$ if there exists $z \in \mathbf{C}^m$ such that all $a_{iI}\ (1\le i\le q,\ I\in\mathcal I)$ are holomorphic at $z$ and for any $1 \leq i_0 < \dots < i_N \leq q$ the system of equations
\begin{align*}
\left\{ \begin{matrix}
Q_{i_j}(z)(w_0,\dots,w_n) = 0\cr
0 \leq j \leq N\end{matrix}\right.
\end{align*}
has only the trivial solution $w = (0,\dots,0)$ in $\mathbf{C}^{n+1}$. If $\{Q_i\}_{i=1}^q$ is in weakly $n-$subgeneral position then we say that it is in weakly general position.

\vskip0.2cm
\noindent
\textbf{2.3. Some theorems and lemmas}

Let $f$ be a nonconstant meromorphic map of $\mathbf{C}^m$ into $\P^n(\mathbf{C})$. Denote by
$\mathcal C_{f}$ the set of all non-negative functions $h : \mathbf{C}^m\setminus A\longrightarrow [0,+\infty]\subset\overline\R$, which are of the form
$$ h=\dfrac{|g_1|+\cdots +|g_l|}{|g_{l+1}|+\cdots +|g_{l+k}|}, $$
where $k,l\in\N,\ g_1,...., g_{l+k}\in\mathcal K_f\setminus\{0\}$ and $A\subset\mathbf{C}^m$, which may depend on
$g_1,....,g_{l+k}$, is an analytic subset of codimension at least two. Then, for $h\in\mathcal C_{f}$ we have
$$\int\limits_{S(r)}\log h\sigma_m= o(T_f (r)).$$

\begin{lemma}[{see \cite{DT}}] \label{lem2.8}
Let $\{Q_i\}_{i=0}^n$ be a set of homogeneous polynomials of degree $d$ in $\mathcal K_f [x_0,..., x_n]$. Then there exists a function $h_1\in\mathcal C_{f}$ such that, outside an analytic set of $\mathbf{C}^m$ of codimension at least two,
$$ \max_{i\in\{0,...,n\}}|Q_i(f_0,...,f_n)|\le h_1||f||^d .$$
If, moreover, this set of homogeneous polynomials is in weakly general position, then there
exists a nonzero function $h_2\in\mathcal C_{f}$ such that, outside an analytic set of $\mathbf{C}^m$ of
codimension at least two,
$$h_2||f||^d \le  \max_{i\in\{0,...,n\}}|Q_i(f_0,...,f_n)|.$$
 \end{lemma}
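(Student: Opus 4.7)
My plan is to prove the two inequalities separately: the upper bound is a direct triangle‑inequality estimate, while the lower bound requires an effective Nullstellensatz / resultant identity enabled by the weakly general position hypothesis. For the upper bound, write $Q_i=\sum_{I\in\mathcal T_d}a_{iI}x^I$ with $a_{iI}\in\mathcal K_f$. Since $|f_0|^{i_0}\cdots|f_n|^{i_n}\le\|f\|^d$ for every multi-index $I=(i_0,\dots,i_n)$ with $i_0+\cdots+i_n=d$, the triangle inequality immediately gives $|Q_i(f)|\le\bigl(\sum_I|a_{iI}|\bigr)\|f\|^d$. Taking $h_1:=\sum_{i,I}|a_{iI}|$, which fits the form defining $\mathcal C_f$ (a finite sum of moduli of elements of $\mathcal K_f\setminus\{0\}$), yields $\max_i|Q_i(f)|\le h_1\|f\|^d$ outside a codimension $\ge 2$ analytic subset.

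For the lower bound, the weakly general position hypothesis provides a point $z_0\in\mathbf{C}^m$ at which all $a_{iI}$ are holomorphic and the system $Q_0(z_0,x)=\cdots=Q_n(z_0,x)=0$ has only the trivial solution in $\mathbf{C}^{n+1}$. Viewing $Q_0,\dots,Q_n$ as $n+1$ homogeneous forms of degree $d$ in $n+1$ variables over the field $\mathcal K_f$, they therefore have no common zero in $\mathbf{P}^n(\overline{\mathcal K_f})$. By the classical theory of the Macaulay resultant — equivalently, by the effective projective Nullstellensatz applied over $\overline{\mathcal K_f}$ followed by clearing denominators — there exist an integer $s\ge d$, a nonzero element $R\in\mathcal K_f$, and homogeneous polynomials $g_{ji}\in\mathcal K_f[x_0,\dots,x_n]$ of degree $s-d$ such that
$$R\cdot x_j^s=\sum_{i=0}^n g_{ji}(x)\,Q_i(x)\qquad (j=0,\dots,n).$$
That $R$ can be chosen in $\mathcal K_f$ (and not merely in $\overline{\mathcal K_f}$), and is nonzero, rests on the Macaulay resultant being a universal polynomial in the coefficients of the $Q_i$, whose evaluation at $z_0$ returns a nonzero complex number.

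To finish, substitute $x=\tilde f=(f_0,\dots,f_n)$ in the identity above, take moduli, and apply the already established upper bound to each $g_{ji}$ to find nonzero $\sigma_{ji}\in\mathcal C_f$ with $|g_{ji}(f)|\le\sigma_{ji}\|f\|^{s-d}$. Then
$$|R||f_j|^s\le\Bigl(\sum_i\sigma_{ji}\Bigr)\|f\|^{s-d}\cdot\max_k|Q_k(f)|.$$
Summing over $j$ and using the standard equivalence of norms on $\mathbf{C}^{n+1}$ in the form $\|f\|^s\le C\sum_j|f_j|^s$ yields $h_2\|f\|^d\le\max_k|Q_k(f)|$ with $h_2:=|R|\big/\bigl(C\sum_{j,i}\sigma_{ji}\bigr)$, a nonzero element of $\mathcal C_f$ (the class is closed under sums, products, and quotients of nonzero members, as a direct manipulation of the defining fractional form shows). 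The principal obstacle in the whole argument is producing the algebraic identity with a denominator in $\mathcal K_f$ rather than in $\overline{\mathcal K_f}$; this is the one place where the moving (parameterized) setting genuinely departs from the classical fixed-coefficient Hilbert Nullstellensatz, and it is resolved cleanly by the resultant. Everything else reduces to triangle-inequality bookkeeping.
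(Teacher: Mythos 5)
The paper only cites this lemma from Dethloff--Tan \cite{DT} and does not give its own proof, so there is nothing in-text to compare against. Your argument is correct and is the standard one in this literature: the upper bound is the plain triangle-inequality estimate (just restrict the sum defining $h_1$ to the nonzero coefficients $a_{iI}$ so that it stays inside $\mathcal C_f$), and for the lower bound you correctly obtain $R\not\equiv 0$ by specializing the universal Macaulay resultant at the point $z_0$ furnished by weak general position, then substitute $\tilde f$ into the inertia-form identity $R\,x_j^{s}=\sum_i g_{ji}Q_i$ and bound the $g_{ji}$ by the first half --- precisely the resultant mechanism the paper itself invokes later via the identity $x_i^{\lambda}\tilde R_I=\sum_{j\in I}\tilde b^I_{ij}Q^T_j$ in the proof of Theorem \ref{thm1.1}.
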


\begin{lemma}[{Lemma on logarithmic derivative, see \cite{NO}}]\label{lem2.9}
Let $f$ be a nonzero meromorphic function on $\mathbf{C}^m.$ Then 
$$\biggl|\biggl|\quad m\biggl(r,\dfrac{\mathcal{D}^\alpha (f)}{f}\biggl)=O(\log^+T(r,f))\ (\alpha\in \Z^m_+).$$
\end{lemma}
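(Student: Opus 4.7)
The plan is to extend the classical one-variable Nevanlinna lemma of the logarithmic derivative to several variables via slicing and averaging over complex lines, together with the Borel growth lemma to eliminate the exceptional $r$-set.

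First, I would reduce to the case $|\alpha|=1$ by induction on the total order. For $|\alpha|\ge 2$, choose an index $i$ with $\alpha_i\ge 1$ and factor
$$\frac{\mathcal{D}^\alpha f}{f}=\frac{\mathcal{D}^{\alpha-e_i}(\partial f/\partial z_i)}{\partial f/\partial z_i}\cdot\frac{\partial f/\partial z_i}{f}.$$
Using $\log^+|xy|\le \log^+|x|+\log^+|y|$ and applying the inductive hypothesis to the meromorphic function $\partial f/\partial z_i$, the problem reduces to the first-order case together with the comparison $T(r,\partial f/\partial z_i)=O(T(\rho r,f))$ for some fixed $\rho>1$. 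This comparison follows from the Cauchy integral bound $\sup_{S(r)}|\partial f/\partial z_i|\le C(\rho-1)^{-1}r^{-1}\sup_{S(\rho r)}|f|$ combined with Jensen's formula; the inflation $r\mapsto\rho r$ is absorbed by the Borel step at the end.

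Second, for $\alpha=e_i$ I would use slicing. For each $\xi\in\mathbf{C}^m$ with $\|\xi\|=1$, let $f_\xi(\zeta):=f(\zeta\xi)$, which is a meromorphic function on $\mathbf{C}$. The classical one-variable Nevanlinna lemma yields
$$m\bigl(r,f_\xi'/f_\xi\bigr)=O\bigl(\log^+ T(r,f_\xi)+\log r\bigr)$$
outside an $r$-set of finite Lebesgue measure. Since $f_\xi'(\zeta)=\sum_j \xi_j(\partial f/\partial z_j)(\zeta\xi)$, integrating this inequality against the rotation-invariant probability measure on the unit sphere and invoking the Crofton-type identity $\int T(r,f_\xi)\,d\mu(\xi)=T(r,f)+O(1)$ yields the desired bound for a weighted average of $m(r,(\partial f/\partial z_j)/f)$; each individual term is then extracted by varying the weights through unitary changes of coordinates.

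The main technical obstacle is uniformizing the exceptional set across slices: the one-variable inequality holds outside a Borel $r$-set that a priori depends on $\xi$, and a Fubini-type argument (Vitter's lemma, in the spirit of Biancofiore--Stoll) is needed to collapse this to a single exceptional $r$-set after integration in $\xi$. Once this is achieved, a final application of the calculus lemma (which says that for nondecreasing positive $\Phi$ one has $\Phi(r+1/\Phi(r))\le 2\Phi(r)$ outside an $r$-set of finite measure) absorbs the inflation $r\mapsto\rho r$ from Step 1, while the elementary estimate $\log r=O(\log T(r,f))$ outside an exceptional set replaces the $\log r$ term by $\log^+ T(r,f)$, giving the statement as written.
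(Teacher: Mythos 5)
The paper offers no proof of this lemma; it is quoted verbatim from Noguchi--Ochiai (the reference \cite{NO}), so there is no internal argument for you to match. Judged on its own merits, your slicing strategy is the standard route (it is essentially the Biancofiore--Stoll/Vitter approach, and close to the one in \cite{NO}), but two steps as written have real problems.

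First, the comparison $T(r,\partial f/\partial z_i)=O(T(\rho r,f))$ cannot be obtained from the Cauchy estimate $\sup_{S(r)}|\partial f/\partial z_i|\le C(\rho-1)^{-1}r^{-1}\sup_{S(\rho r)}|f|$, because $f$ is merely meromorphic: near the polar set $\sup_{S(\rho r)}|f|=\infty$, and the bound is vacuous. The correct way to close the induction is to prove the $|\alpha|=1$ case first and then derive the characteristic comparison from it, via $m(r,\partial_i f)\le m(r,f)+m(r,\partial_i f/f)$ together with $N(r,\partial_i f)\le 2N(r,f)+O(1)$ (poles of $\partial_i f$ lie on the polar divisor of $f$ with at most doubled multiplicity). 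This gives $T(r,\partial_i f)\le 2T(r,f)+O(\log^+ T(r,f)+\log r)$ with no inflation of $r$ and no appeal to Cauchy bounds, and then the induction you describe goes through.

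Second, the exceptional-set uniformization across slices is not merely a ``technical obstacle'' that Fubini collapses; if you invoke the one-variable lemma in its Borel form (valid outside an $r$-set depending on $\xi$), the union over $\xi$ of the exceptional sets need not have finite measure, and Fubini alone does not rescue you. The clean fix, and the one actually used in \cite{NO} and in Vitter's paper, is to apply on each slice the \emph{exceptional-set-free} form of the one-variable estimate, of the type $m(r,f_\xi'/f_\xi)\le C\log^+ T(\rho,f_\xi)+C\log\frac{1}{\rho-r}+O(1)$ valid for all $r<\rho$, integrate this pointwise inequality over $\xi$ using the Crofton identity (no exceptional sets have appeared yet), and only then apply the Borel/calculus lemma once, in the single variable $r$, choosing $\rho=\rho(r)$ suitably. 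With those two repairs your outline becomes a correct proof; as written, both the reduction step and the slicing step contain gaps.
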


Repeating the argument in (Prop. 4.5 \cite{Fu}), we have the following.

\begin{proposition}[{see \cite[Prop. 4.5]{Fu}}]\label{2.3}
Let $\Phi_1,...,\Phi_k$ be meromorphic functions on $\mathbf{C}^m$ such that $\{\Phi_1,...,\Phi_k\}$ 
are  linearly independent over $\mathbf{C}.$ Then  there exists an admissible set  
$$\{\alpha_i=(\alpha_{i1},...,\alpha_{im})\}_{i=1}^k \subset \Z^m_+$$
with $|\alpha_i|=\sum_{j=1}^{m}|\alpha_{ij}|\le i-1 \ (1\le i \le k)$ such that the following are satisfied:

(i)\  $\{{\mathcal D}^{\alpha_i}\Phi_1,...,{\mathcal D}^{\alpha_i}\Phi_k\}_{i=1}^{k}$ is linearly independent over $\mathcal M,$\ i.e., \ $\det{({\mathcal D}^{\alpha_i}\Phi_j)}\not\equiv 0,$ 

(ii) $\det \bigl({\mathcal D}^{\alpha_i}(h\Phi_j)\bigl)=h^{k}\cdot \det \bigl({\mathcal D}^{\alpha_i}\Phi_j\bigl)$ for
any nonzero meromorphic function $h$ on $\mathbf{C}^m.$
\end{proposition}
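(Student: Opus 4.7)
My plan is to follow Ru's approach for hypersurfaces (as extended to moving targets by Dethloff--Tan, Theorem~E), but to systematically replace the Nochka-weight machinery by the author's reduction Lemma~\ref{lem3.1}, which converts any $N+1$ moving hypersurfaces in weakly $N$-subgeneral position into $n+1$ moving hypersurfaces in weakly general position. First I would raise each $Q_i$ to a suitable power so that all hypersurfaces share a common degree $d$; this normalization produces the factors $1/d_i$ in the final inequality. For each point $z$ at which all coefficients are holomorphic, one orders the quantities $|\tilde Q_i(f)(z)|/\|\tilde f(z)\|^{d}$ increasingly, and by weak $N$-subgeneral position only an $(N+1)$-subset $\{i_0,\ldots,i_N\}$ of indices can cluster near $0$ on a small neighborhood. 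Lemma~\ref{lem3.1} then builds moving hypersurfaces $P_1,\ldots,P_{n+1}$ of degree $d$, slow with respect to $f$ and in weakly general position, whose values at $z$ majorize a suitable weighted product of $\tilde Q_{i_0}(f)(z),\ldots,\tilde Q_{i_N}(f)(z)$; exponentiating this pointwise comparison is where the usual constant $n+1$ of the general-position case is replaced by $(N-n+1)(n+1)$.

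Next, following Ru's technique, I fix the integer $L$ prescribed in the statement and pass to the Veronese-type embedding: letting $M:=\binom{L+n}{n}$, the space of homogeneous polynomials of degree $L$ in $x_0,\ldots,x_n$ supplies a mapping $F:\mathbf{C}^m\to\P^{M-1}(\mathbf{C})$. The Corvaja--Zannier dimension estimate, together with the algebraic nondegeneracy of $f$ over $\mathcal K_{\{Q_i\}_{i=1}^q}$, forces $F$ to be linearly nondegenerate over $\mathcal K_f$. Each $P_j$ of degree $d$ lifts to a linear form in the $M$ Veronese coordinates with coefficients in $\mathcal K_f$; for each $(n+1)$-tuple $(P_1,\ldots,P_{n+1})$ produced in the previous step I would extend it to a basis of $M$ linear forms and use Proposition~\ref{2.3} to produce an admissible family of differential operators with a non-vanishing Wronskian $W$. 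Lemma~\ref{lem2.9} (logarithmic derivatives), Lemma~\ref{lem2.8} (lower and upper control of $\max_j |P_j(f)|$ modulo $\mathcal C_f$) and the Jensen formula then combine to give an integrated Second Main type inequality for the $(P_j)$'s, in which the truncation level on the counting functions is at most the degree of $W$ as a polynomial in the coordinates.

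Integrating the pointwise majorization of the first step against the linear SMT of the second, and summing over the finitely many possible orderings of $|\tilde Q_i(f)(z)|$ (which partition $\mathbf{C}^m$ into measurable pieces on each of which a single ordering prevails), one obtains
$$
||\ (q-(N-n+1)(n+1)-\epsilon)T_f(r)\le \sum_{i=1}^{q}\dfrac{1}{d_i}N^{[L_j]}_{Q_i(f)}(r)+o(T_f(r)).
$$
The explicit $L_0$ in the statement emerges from compounding the Corvaja--Zannier parameter $L$, the dimension $M=\binom{L+n}{n}$, the Wronskian degree bound, and the combinatorial factor $p_0$ needed for the reduction in Lemma~\ref{lem3.1}. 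I expect the hardest step to be Lemma~\ref{lem3.1} itself: simultaneously securing weakly general position, slowness with respect to $f$, and the correct pointwise majorization of the original $N+1$ hypersurfaces by the $n+1$ replacement ones is what makes the extension from general to subgeneral position work; tracking all the numerical constants coming from this reduction through the chain of inequalities to recover the precise value of $L_0$ stated above is the most delicate bookkeeping task of the proof.
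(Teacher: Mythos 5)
Your proposal does not address Proposition~\ref{2.3} at all: it is a high-level outline of the proof of the main result, Theorem~\ref{thm1.1} (normalization of degrees, the reduction Lemma~\ref{lem3.1} from weakly $N$-subgeneral to weakly general position, the Veronese-type embedding into $\P^{M-1}(\C)$, application of the general Second Main Theorem, and the bookkeeping that yields $L_0$), and indeed it invokes Proposition~\ref{2.3} as one of its tools rather than proving it. What Proposition~\ref{2.3} actually asserts is a statement about generalized Wronskians of meromorphic functions: given $\Phi_1,\dots,\Phi_k$ linearly independent over $\C$, there exist multi-indices $\alpha_1,\dots,\alpha_k\in\Z_+^m$ with $|\alpha_i|\le i-1$ such that $\det(\mathcal D^{\alpha_i}\Phi_j)\not\equiv 0$, together with the homogeneity identity $\det(\mathcal D^{\alpha_i}(h\Phi_j))=h^k\det(\mathcal D^{\alpha_i}\Phi_j)$. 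Nothing in your text engages with either part.

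For the record, the paper itself offers no proof of Proposition~\ref{2.3} either; it is taken from Fujimoto \cite{Fu}, and the paper merely notes that one may repeat Fujimoto's argument. A genuine proof would run along the following lines. For part (i), one argues inductively: having chosen $\alpha_1=(0,\dots,0)$, suppose $\alpha_1,\dots,\alpha_{i}$ have been fixed so that the rows $(\mathcal D^{\alpha_1}\Phi_j),\dots,(\mathcal D^{\alpha_i}\Phi_j)$ have rank $i$ over $\mathcal M$; if for every $\alpha$ with $|\alpha|\le i$ the row $(\mathcal D^{\alpha}\Phi_j)$ were in the $\mathcal M$-span of the first $i$ rows, one would deduce (using the analyticity of the $\Phi_j$ and a unique-continuation/Wronskian argument) a nontrivial $\C$-linear relation among $\Phi_1,\dots,\Phi_k$, contradicting the hypothesis. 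Choosing $\alpha_{i+1}$ minimal (in a suitable ordering compatible with $|\alpha|$) with this property and iterating up to $i=k$ gives the admissible set. For part (ii), one expands $\mathcal D^{\alpha_i}(h\Phi_j)$ by Leibniz's rule; the admissibility (minimality) of the $\alpha_i$'s guarantees that all contributions from $\mathcal D^{\beta}h\cdot\mathcal D^{\alpha_i-\beta}\Phi_j$ with $\beta\ne 0$ are $\mathcal M$-linear combinations of earlier rows and therefore cancel under the determinant, leaving exactly $h^k\det(\mathcal D^{\alpha_i}\Phi_j)$. You should supply an argument of this type; the material you wrote belongs to the proof of Theorem~\ref{thm1.1}, not here.
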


The next general form of second main theorem for hyperplanes is due to M. Ru \cite{Ru97}
\begin{theorem}[{see \cite[Theorem 2.3]{Ru97}}]\label{2.4}
Let $f$ be a linealy nondegenerate meromorphic mapping of $\C^m$ in $\P^n(\C)$ with a reduced representation $\tilde f=(f_0,...,f_n)$ and let $H_1,..,H_q$ be $q$ arbitrary hyperplanes in $\P^n(\C)$. Then we have
$$ ||\ \int_{S(r)}\max_{K}\log\left (\prod_{j\in K}\frac{||\tilde f||.||H_j||}{|H_j(\tilde f)|}\sigma_{m}\right)\le (n+1)T_f(r)-N_{W^\alpha (f_i)}(r)+o(T_f(r)),$$
where $\alpha$ is an admissible set with respect to $\tilde f$ (as in Proposition \ref{2.3}) and the maximum is taken over all subsets $K\subset\{1,...,q\}$ such that $\{H_j\ ;\ j\in K\}$ is linearly indenpendent.
\end{theorem}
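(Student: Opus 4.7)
The plan is to reduce Theorem \ref{thm1.1} from the weakly $N$-subgeneral position setting to the weakly general position one via the author's forthcoming Lemma \ref{lem3.1}, and then to reduce the resulting hypersurface problem to a hyperplane problem by a Veronese-type filtration so that Ru's general second main theorem (Theorem \ref{2.4}) becomes applicable. First, by replacing each $Q_i$ with $Q_i^{d/d_i}$ where $d=\mathrm{lcm}(d_1,\dots,d_q)$, I would arrange that all moving hypersurfaces have a common degree $d$; the factor $L_j=L_0/d_j$ in the conclusion arises naturally from this normalization. For each $z\in\C^m$ outside a thin analytic set, I would identify the $N+1$ indices $I(z)\subset\{1,\dots,q\}$ realizing the smallest values of $|Q_i(f)(z)|/\|\tilde f(z)\|^d$; since $I(z)$ takes only finitely many values, the full proximity-style sum $\sum_{i=1}^q\log(\|\tilde f\|^d/|Q_i(f)|)$ is, up to a term in $\mathcal{C}_f$, dominated by the corresponding subsum over $I(z)$.

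The decisive step is then Lemma \ref{lem3.1}: from each $(N+1)$-tuple of moving hypersurfaces in weakly $N$-subgeneral position, I would produce $n+1$ new moving hypersurfaces $P_0,\dots,P_n$ (of some common degree $D$, with coefficients still in $\mathcal{K}_f$) that are in weakly general position and for which
$$\sum_{i\in I}\log\frac{\|\tilde f\|^d}{|Q_i(f)|}\le(N-n+1)\sum_{j=0}^{n}\log\frac{\|\tilde f\|^{D}}{|P_j(f)|}+O(1)$$
holds. It is precisely through the multiplier $(N-n+1)$ in this comparison that the $(N-n+1)(n+1)$ factor in the conclusion will appear, playing the role taken by the Nochka weights in the hyperplane subgeneral case. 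Once the $P_j$ are in weakly general position, Lemma \ref{lem2.8} gives the bound $h_2\|\tilde f\|^D\le\max_j|P_j(f)|$ needed to convert everything into statements about the characteristic function $T_f(r)$.

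Next I would carry out the Corvaja--Zannier filtration in the moving-target setting, following the strategy of \cite{DT} and \cite{Q16}. Fix the integer $L$ from the statement and let $V_L$ be the space of homogeneous polynomials of degree $L$ in $x_0,\dots,x_n$ over $\mathcal{K}_f$, of dimension $H_L=\binom{L+n}{n}$. Filter $V_L$ by the ideals generated by $P_0,\dots,P_n$, choose an adapted basis $\{\psi_k\}_{k=1}^{H_L}$, and form $F=(\psi_1(f):\cdots:\psi_{H_L}(f))$; the algebraic nondegeneracy of $f$ over $\mathcal{K}_{\{Q_i\}_{i=1}^q}$ implies that $F$ is linearly nondegenerate over $\mathcal{K}_f$. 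A multiplicity count on the filtration then bounds $\sum_k\log(\|\tilde f\|^L/|\psi_k(f)|)$ below by $\tfrac{LH_L}{(n+1)D}(1-O(L^{-1}))\sum_{j=0}^n\log(\|\tilde f\|^D/|P_j(f)|)+O(1)$, and the $L^{-1}$-error is absorbed by choosing $L$ as large as in the statement.

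Finally, applying Theorem \ref{2.4} to $F$ with the coordinate hyperplanes of $\P^{H_L-1}(\C)$ and using $T_F(r)\le LT_f(r)+o(T_f(r))$, I would obtain the untruncated inequality $(q-(N-n+1)(n+1)-\epsilon)T_f(r)\le\sum_{i=1}^q d_i^{-1}N_{Q_i(f)}(r)+o(T_f(r))$. The truncated version then follows from the Wronskian term $N_{W^\alpha}$ appearing in Theorem \ref{2.4} combined with Proposition \ref{2.3} and Lemma \ref{lem2.9}: the admissible multi-indices have total order at most $H_L-1$, and combining this with the $\binom{q}{n}$ many $(N+1)$-subsets and the $p_0$ safety factor arising from the logarithmic derivative lemma yields the explicit truncation level $L_0$ stated in the theorem. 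The main obstacle I expect is twofold: first, proving Lemma \ref{lem3.1} so that the sharp multiplier $(N-n+1)$ really does come out of the construction while keeping the $P_j$ in $\mathcal{K}_f$; and second, the delicate bookkeeping required to extract the explicit $L_0$, in particular making sure the logarithmic-derivative error in the moving-target regime is absorbed into $o(T_f(r))$ at the stated exponent involving $\binom{L+n}{n}$.
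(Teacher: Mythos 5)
Your proposal does not address the statement you were asked to prove. The statement is Theorem~\ref{2.4}, Ru's general form of the second main theorem for a linearly nondegenerate meromorphic map and arbitrary (fixed) hyperplanes $H_1,\dots,H_q$ in $\P^n(\C)$, bounding the integral of $\max_K\log\prod_{j\in K}\frac{\|\tilde f\|\cdot\|H_j\|}{|H_j(\tilde f)|}$ by $(n+1)T_f(r)-N_{W^\alpha(f_i)}(r)+o(T_f(r))$. What you have written instead is a proof outline for Theorem~\ref{thm1.1}, the paper's main result on moving hypersurfaces in subgeneral position; your very first sentence announces ``the plan is to reduce Theorem~\ref{thm1.1}\ldots'', and you invoke Theorem~\ref{2.4} as a black-box tool rather than as the thing to be established. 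Nothing in the proposal touches the actual content of Theorem~\ref{2.4}: the construction of the admissible derivative set $\alpha$, the Wronskian-type second main theorem mechanism, the comparison of $\max_K\log\prod_{j\in K}\|\tilde f\|\|H_j\|/|H_j(\tilde f)|$ with the logarithmic derivative term, or the appearance of the coefficient $(n+1)$ and the counting function $N_{W^\alpha(f_i)}(r)$.

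For the record, the paper itself does not prove Theorem~\ref{2.4}: it cites it as Theorem~2.3 of \cite{Ru97}, remarking only that Ru's original proof for holomorphic curves from $\C$ extends with minor modification to meromorphic mappings from $\C^m$. If you want to actually prove Theorem~\ref{2.4}, the correct route is entirely different from what you sketched: fix the admissible set $\alpha=\{\alpha_1,\dots,\alpha_{n+1}\}$ from Proposition~\ref{2.3} and form the generalized Wronskian $W^\alpha(\tilde f)=\det(\mathcal D^{\alpha_i}f_j)$; for each admissible $K$ with $\{H_j: j\in K\}$ linearly independent, change coordinates so that the $H_j(\tilde f)$ for $j\in K$ become part of a basis, express the Wronskian as a product of $\prod_{j\in K}H_j(\tilde f)$ and a logarithmic-derivative-type factor, take the maximum over $K$, integrate over $S(r)$, and apply the logarithmic derivative lemma (Lemma~\ref{lem2.9}) to control the derivative factors by $o(T_f(r))$; Jensen's formula applied to $W^\alpha(\tilde f)$ then produces exactly the term $(n+1)T_f(r)-N_{W^\alpha(f_i)}(r)$. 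None of this involves Lemma~\ref{lem3.1}, the Corvaja--Zannier filtration, moving hypersurfaces, or subgeneral position; those belong to Theorem~\ref{thm1.1}, for which your outline is actually a reasonable high-level sketch of the paper's argument.
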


\noindent
We note that the original theorem of M. Ru states only for the case of holomorphic curves  from $\C$. However its proof also is valid for the case of meromorphic mappings from $\C^m$ with a slight modification. 

We have some following algebraic lemmas due to \cite{CZ,DT}

\begin{lemma}[{see \cite[Lemma 2.2]{CZ}}]\label{2.5}
Let $A$ be a commutative ring and let $\{\phi_1,\ldots ,\phi_p\}$ be a regular sequence in $A$, i.e., for $i=1,\ldots ,p, \phi_i$ is not a zero divisor of $A/(\phi_1,\ldots ,\phi_{i-1})$. Denote by $I$ the ideal in $A$ generated by $\phi_1,\ldots ,\phi_p$. Suppose that for some $q,q_1,\ldots ,q_h\in A$, we have an equation
$$ \phi_1^{i_1}\cdots \phi_p^{i_p}\cdot q=\sum_{r=1}^h\phi_1^{j_1(r)}\cdots \phi_p^{j_p(r)}\cdot q_r, $$
where $(j_1(r),\ldots ,j_p(r))>(i_1,\ldots ,i_p)$ for $r=1,\ldots ,h$. Then $q\in I$.
\end{lemma}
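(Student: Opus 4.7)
The plan is to prove the lemma by induction on the length $p$ of the regular sequence. Throughout, the ordering on $\N^p$ indicated by $>$ is the lexicographic one: $(j_1,\ldots,j_p) > (i_1,\ldots,i_p)$ means there is some $k$ with $j_\ell = i_\ell$ for $\ell < k$ and $j_k > i_k$.

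For the base case $p=1$, the hypothesis reads $\phi_1^{i_1} q = \sum_r \phi_1^{j_1(r)} q_r$ with $j_1(r) \geq i_1+1$ for every $r$. Rewriting this as $\phi_1^{i_1}\bigl(q - \sum_r \phi_1^{j_1(r)-i_1} q_r\bigr) = 0$ and using that $\phi_1$ (hence $\phi_1^{i_1}$) is not a zero divisor in $A$, I conclude $q = \sum_r \phi_1^{j_1(r)-i_1} q_r \in \phi_1 A = I$.

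For the inductive step, suppose the result holds for regular sequences of length $p-1$. Partition $\{1,\ldots,h\}$ as $S_1 \sqcup S_2$ with $S_1 = \{r : j_1(r) > i_1\}$ and $S_2 = \{r : j_1(r) = i_1\}$. For every $r \in S_2$, the lexicographic hypothesis forces $(j_2(r),\ldots,j_p(r)) > (i_2,\ldots,i_p)$. Transferring the $S_2$-terms to the left side and factoring out $\phi_1^{i_1}$ converts the given relation into
\begin{align*}
\phi_1^{i_1}\Bigl(\phi_2^{i_2}\cdots\phi_p^{i_p}\,q \;-\; \sum_{r\in S_2} \phi_2^{j_2(r)}\cdots\phi_p^{j_p(r)}\,q_r\Bigr)
= \phi_1^{i_1+1}\sum_{r\in S_1} \phi_1^{j_1(r)-i_1-1}\phi_2^{j_2(r)}\cdots\phi_p^{j_p(r)}\,q_r ,
\end{align*}
every exponent on the right being a nonnegative integer. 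Since $\phi_1^{i_1}$ is a non-zero-divisor I may cancel it, and then reducing modulo $\phi_1$ in $\bar A := A/(\phi_1)$ gives
\begin{align*}
\bar\phi_2^{i_2}\cdots\bar\phi_p^{i_p}\,\bar q \;=\; \sum_{r\in S_2} \bar\phi_2^{j_2(r)}\cdots\bar\phi_p^{j_p(r)}\,\bar q_r .
\end{align*}
By the very definition of a regular sequence, $\{\bar\phi_2,\ldots,\bar\phi_p\}$ is a regular sequence of length $p-1$ in $\bar A$, and the lexicographic inequality is exactly the one needed. Applying the inductive hypothesis in $\bar A$ yields $\bar q \in (\bar\phi_2,\ldots,\bar\phi_p)$, which lifts to $q \in (\phi_1,\phi_2,\ldots,\phi_p) = I$.

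The only real subtlety is bookkeeping: splitting the right-hand sum according to whether $j_1(r)$ equals or exceeds $i_1$, and checking that the lex condition descends cleanly to the ``tail'' indices $(j_2(r),\ldots,j_p(r))$ in the $S_2$-block. No ingredient beyond the defining non-zero-divisor property of a regular sequence is needed, and this is precisely the argument of Corvaja--Zannier.
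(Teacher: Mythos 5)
The paper does not prove this lemma itself; it cites \cite[Lemma 2.2]{CZ} and relies on the original. So there is no in-paper proof to compare against. Your blind proof is correct and is, in substance, the argument of Corvaja--Zannier: induct on $p$; for $p=1$ cancel the non-zero-divisor $\phi_1^{i_1}$; for $p\ge 2$ split the right-hand side according to whether $j_1(r)>i_1$ or $j_1(r)=i_1$ (noting $j_1(r)\ge i_1$ is forced by the lexicographic inequality), factor out and cancel $\phi_1^{i_1}$, pass to $\bar A=A/(\phi_1)$, observe that $\bar\phi_2,\ldots,\bar\phi_p$ remain a regular sequence there and that the $S_2$-terms inherit the lex inequality on the tail, then invoke the inductive hypothesis and lift. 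One small remark: if $S_2=\emptyset$ the reduced relation in $\bar A$ is $\bar\phi_2^{i_2}\cdots\bar\phi_p^{i_p}\bar q=0$, i.e.\ the $h=0$ instance; this is still covered by the same induction (the base case gives $q=0$ when $\phi_1^{i_1}q=0$), but it is worth flagging that your inductive hypothesis must be read as allowing an empty sum on the right.
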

Here, as throughout this paper, we use the lexicographic order on $\mathbf N_0^p$. Namely, 
$$(j_1,\ldots,j_p)>(i_1,\ldots,i_p)$$
iff for some $s\in\{1,\ldots ,p\}$ we have $j_l=i_l$ for $l<s$ and $j_s>i_s$.

\begin{lemma}[{see \cite[Lemma 3.2]{DT}}]\label{2.6}
Let $\{Q_i\}_{i=1}^q\ (q\ge n+1)$ be a set of homogeneous polynomials of common degree $d\ge 1$ in $\mathcal K_f[x_0,\ldots ,x_n]$ in weakly general position. Then for any pairwise different $1\le j_0,\ldots ,j_n\le q$ the sequence $\{Q_{j_0},\ldots,Q_{j_n}\}$ of elements in $\mathcal K_{\{Q_i\}}[x_0,\ldots,x_n]$ is a regular sequence, as well as all its subsequences.
\end{lemma}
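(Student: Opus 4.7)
\textbf{Proof plan for Lemma \ref{2.6}.}

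The plan is to reduce the claim to the single assertion that $Q_{j_0},\ldots,Q_{j_n}$ itself is a regular sequence in $K[x_0,\ldots,x_n]$ where $K:=\mathcal K_{\{Q_i\}}$, and then to transfer the weakly general position condition at a single point $z\in\C^m$ into a statement valid over the field $K$ by means of the Macaulay multivariate resultant.

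First I would invoke a standard permutation principle: in a positively graded polynomial ring over a field, any permutation of a regular sequence consisting of homogeneous elements of positive degree is again a regular sequence (see, e.g., Matsumura, \emph{Commutative Ring Theory}, Thm.\ 16.3, in its graded form, or Bruns--Herzog, Prop.\ 1.5.11). Consequently, once the full length-$(n+1)$ sequence is proved regular, any subsequence can be pushed to the front by a permutation, where it becomes an initial segment of a regular sequence and is therefore itself regular. Thus the ``as well as all its subsequences'' part is automatic, and it suffices to prove that $Q_{j_0},\ldots,Q_{j_n}$ is regular.

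For the full sequence I would apply the classical equivalence: $n+1$ homogeneous polynomials of positive degree in the polynomial ring $K[x_0,\ldots,x_n]$ form a regular sequence if and only if they have only the trivial common zero in $\overline{K}^{n+1}$, equivalently the Macaulay resultant $R:=\mathrm{Res}(Q_{j_0},\ldots,Q_{j_n})\in K$ is nonzero. (This combines Cohen--Macaulayness of the polynomial ring, which makes ``regular sequence'' equivalent to ``system of parameters,'' with the projective Nullstellensatz, which makes ``empty projective zero set'' equivalent to the quotient being Artinian.) It therefore remains only to verify $R\ne 0$ in $K$. Since the coefficients of the $Q_{j_i}$ lie in $K\subset\mathcal M$, the resultant is a fixed polynomial expression in meromorphic functions on $\C^m$, hence is itself a meromorphic function on $\C^m$. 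By the weakly general position hypothesis, there exists $z\in\C^m$ at which every coefficient is holomorphic and the specialized system $Q_{j_0}(z),\ldots,Q_{j_n}(z)$ over $\C$ has only the trivial common zero in $\C^{n+1}$; the defining property of the Macaulay resultant over $\C$ then yields $R(z)=\mathrm{Res}(Q_{j_0}(z),\ldots,Q_{j_n}(z))\ne 0$. Hence $R$ is not identically zero as a meromorphic function, so it is a nonzero element of $K$, finishing the argument.

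The only real subtlety is the transfer from a single specialization point $z$ to a statement over $K$; the Macaulay resultant is designed for exactly this, converting the pointwise non-vanishing over $\C$ into a polynomial identity in $K$ detected by any one nonzero specialization. Everything else is routine graded commutative algebra applied to $K[x_0,\ldots,x_n]$.
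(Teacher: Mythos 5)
The paper does not supply its own proof of Lemma~\ref{2.6}: it simply cites Dethloff--Tan \cite[Lemma 3.2]{DT}. So there is no in-paper argument to compare against; I can only assess your proposal on its own merits.

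Your argument is correct and is the natural route. The chain ``regular sequence $\Leftrightarrow$ homogeneous system of parameters (Cohen--Macaulayness of $K[x_0,\ldots,x_n]$) $\Leftrightarrow$ Artinian quotient $\Leftrightarrow$ empty projective zero set over $\overline K$ (projective Nullstellensatz) $\Leftrightarrow$ nonvanishing Macaulay resultant'' is sound for $n+1$ homogeneous forms of positive degree over a field, and the permutation principle for homogeneous regular sequences in the graded polynomial ring correctly disposes of the ``all subsequences'' clause by reducing it to initial segments of a permuted sequence. The specialization step is also the right idea: the Macaulay resultant commutes with specialization of the coefficients, so the weakly-general-position hypothesis, which produces a point $z_0$ at which the specialized system over $\C$ has only the trivial zero, forces the resultant of the specialized forms to be nonzero, hence the resultant as an element of $\mathcal M$ is not identically zero and therefore is a nonzero element of $K=\mathcal K_{\{Q_i\}}$.

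One small point you pass over silently: in the paper's setup the hypersurfaces are first given with holomorphic coefficients $a_{iI}\in\mathcal H_{\C^m}$ and only afterwards normalized (dividing by a chosen nonzero coefficient) to land in $\mathcal K_f[x_0,\ldots,x_n]$, while the weakly-general-position condition is phrased in terms of the unnormalized data at $z_0$. The resultant of the normalized forms differs from the resultant of the unnormalized ones by a product of (negative) powers of the chosen leading coefficients $a_{j_iI_0}$, which are holomorphic and not identically zero; so nonvanishing of the unnormalized resultant at $z_0$ still yields a not-identically-zero meromorphic function, hence a nonzero element of $K$. This is a one-line patch, not a real gap, but worth saying explicitly, because as written your sentence ``$R(z)=\mathrm{Res}(Q_{j_0}(z),\ldots,Q_{j_n}(z))\ne 0$'' conflates the two normalizations and could fail pointwise at $z_0$ even though the conclusion $R\ne0$ in $K$ is correct.
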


\section{Second main theorems for moving hypersurfaces}

We first prove the following lemma.

\begin{lemma}\label{lem3.1}
Let $Q_1,...,Q_{N+1}$ be homogeneous polynomials in $\mathcal K_f[x_0,...,x_n]$ of the same degree $d\ge 1$, in weakly $N$-subgeneral position. Then there exist $n$ homogeneous polynomials $P_{2},...,P_{n+1}$ in $\mathcal K_f[x_0,...,x_n]$ of the forms
$$P_t=\sum_{j=2}^{N-n+t}c_{tj}Q_j, \ c_{tj}\in\C,\ t=2,...,n+1,$$
such that the family $\{P_1,...,P_{n+1}\}$ is in weakly general position, where $P_1=Q_1$.
\end{lemma}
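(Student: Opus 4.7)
The plan is to construct $P_2, \ldots, P_{n+1}$ one at a time by choosing the $\C$-coefficients $c_{tj}$ generically, so that after evaluating at a fixed witness point the projective variety $V(P_1, \ldots, P_t) \subset \P^n(\C)$ has pure dimension $n - t$ at every step $t = 1, \ldots, n+1$; taking $t = n+1$ then yields $V(P_1, \ldots, P_{n+1}) = \emptyset$, which is exactly the weakly general position condition. To begin, I fix a point $z \in \C^m$ at which every coefficient of every $Q_i$ is holomorphic, each specialization $\tilde{Q}_i := Q_i(z) \in \C[x_0, \ldots, x_n]$ is nonzero, and $\tilde{Q}_1, \ldots, \tilde{Q}_{N+1}$ have no common zero in $\P^n(\C)$; such a $z$ exists by the weakly $N$-subgeneral position hypothesis together with the moving-hypersurface assumption on each $Q_i$. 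Setting $P_1 = Q_1$, the base case $V(\tilde{P}_1)$ has pure dimension $n-1$.

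For the inductive step $t \ge 2$, suppose $V(\tilde{P}_1, \ldots, \tilde{P}_{t-1})$ has pure dimension $n - t + 1$, with irreducible components $W_1, \ldots, W_s$. Granting the following key claim---\emph{for each $\alpha$, not all of $\tilde{Q}_2, \ldots, \tilde{Q}_{N-n+t}$ vanish identically on $W_\alpha$}---the set of coefficient vectors $(c_{t,2}, \ldots, c_{t,N-n+t}) \in \C^{N-n+t-1}$ for which the specialization $\tilde{P}_t := \sum_{j=2}^{N-n+t} c_{tj} \tilde{Q}_j$ vanishes on some $W_\alpha$ is a finite union of proper linear subspaces. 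Any $(c_{tj})$ outside this union yields a $P_t$ whose specialization does not vanish on any component of $V(\tilde{P}_1, \ldots, \tilde{P}_{t-1})$; then by Krull's Hauptidealsatz $V(\tilde{P}_1, \ldots, \tilde{P}_t)$ has pure dimension $n - t$, completing the induction.

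The main obstacle is the key claim, which is precisely where the weakly $N$-subgeneral position hypothesis must be exploited. Suppose toward a contradiction that $\tilde{Q}_2, \ldots, \tilde{Q}_{N-n+t}$ all vanish on some component $W_\alpha$; since $\tilde{P}_1 = \tilde{Q}_1$ also vanishes there, we obtain $W_\alpha \subset V(\tilde{Q}_1, \ldots, \tilde{Q}_{N-n+t})$. Intersecting $W_\alpha$ with the remaining $n - t + 1$ hypersurfaces $V(\tilde{Q}_{N-n+t+1}), \ldots, V(\tilde{Q}_{N+1})$ then produces a subvariety of $V(\tilde{Q}_1, \ldots, \tilde{Q}_{N+1}) = \emptyset$. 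However, $W_\alpha$ has dimension exactly $n - t + 1$, and by iterated application of the projective dimension theorem (each hypersurface intersection drops dimension by at most one while keeping a projective variety of positive dimension nonempty), this intersection has dimension at least $(n - t + 1) - (n - t + 1) = 0$ and is therefore nonempty---a contradiction. Balancing the codimension afforded by the remaining $n - t + 1$ hypersurfaces against the dimension of $W_\alpha$ is the heart of the argument, and explains why the specific slicing size $N - n + t$ appearing in the formula for $P_t$ is exactly what is needed.
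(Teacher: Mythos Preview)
Your proof is correct and follows essentially the same approach as the paper: fix a witness point $z_0$, set $P_1=Q_1$, and inductively choose the coefficients of $P_t$ to avoid the (finitely many) proper linear subspaces of bad coefficient vectors, using the dimension count against the remaining $n-t+1$ hypersurfaces $Q_{N-n+t+1},\ldots,Q_{N+1}$ to show those subspaces are proper. Your presentation is in fact slightly tighter than the paper's (you note the component set is finite rather than merely countable, and you invoke the projective dimension theorem directly rather than via a pre-stated bound on $\dim\bigcap_{i\le N-n+t}Q_i^*(z_0)$), but the underlying idea is identical.
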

\begin{proof} We assume that $Q_i\ (1\le i\le N+1)$ has the following form
$$ Q_i=\sum_{I\in\mathcal T_d}a_{iI}\omega^I.$$
By the definition of the weakly subgeneral position, there exists a point $z_0\in\C^m$ such that $a_{iI}$ is holomorphic at $z_0$ for all $i$ and $I$, and the following system of equations 
$$ Q_i(z_0)(\omega_0,...,\omega_n)=0, 1\le i\le N+1, $$
has only trivial solution $(0,...,0)$. We may assume that $Q_i(z_0)\not\equiv 0$ for all $1\le i\le N+1$. 

For each homogeneous polynomials $Q\in\C[x_0,...,x_n]$, we will denote by $Q^*$ the fixed hypersurface in $\P^n(\C)$ defined by $Q$, i.e.,
$$ Q^*=\{(\omega_0:\cdots :\omega_n)\in\P^n(\C)\ |\ Q(\omega_0,...,\omega_n)=0\}.$$
Setting $P_1=Q_1$, we see that
$$ \dim \left(\bigcap_{i=1}^tQ^*_i(z_0)\right)\le N-t+1,\ t=N-n+2,...,N+1,$$
where $\dim\emptyset =-\infty$.

Step 1. We will construct $P_2$ as follows. For each irreducible component $\Gamma$ of dimension $n-1$ of $Q^*_1(z_0)$, we put 
$$V_{1\Gamma}=\{c=(c_2,...,c_{N-n+2})\in\C^{N-n+1}\ ;\ \Gamma\subset Q^*_c(z_0),\text{ where }Q_c=\sum_{j=2}^{N-n+2}c_jQ_j\}.$$
Then $V_{1\Gamma}$ is a subspace of $\C^{N-n+1}$. Since $\dim \left(\bigcap_{i=1}^{N-n+2}Q^*_i(z_0)\right)\le n-2$, there exists $i\in\{2,...,N-n+2\}$ such that $\Gamma\not\subset Q^*_i(z_0)$. This implies that $V_{1\Gamma}$ is a proper subspace of $\C^{N-n+1}$. Since the set of irreducible components of dimension $n-1$ of $Q^*_1(z_0)$ is at most countable, 
$$ \C^{N-n+1}\setminus\bigcup_{\Gamma}V_{1\Gamma}\ne\emptyset. $$
Hence, there exists $(c_{12},...,c_{1(N-n+2)})\in\C^{N-n+1}$ such that
$$ \Gamma\not\subset P^*_2(z_0)$$
for all irreducible components of dimension $n-1$ of $Q^*_1(z_0)$, where
$P_2=\sum_{j=2}^{N-n+2}c_{1j}Q_j.$
This clearly implies that $\dim \left(P_1^*(z_0)\cap P_2^*(z_0)\right)\le n-2.$

Step 2. For each irreducible component $\Gamma'$ of dimension $n-2$ of $\left(P_1^*(z_0)\cap P_2^*(z_0)\right)$, put 
$$V_{2\Gamma'}=\{c=(c_2,...,c_{N-n+3})\in\C^{N-n+2}\ ;\ \Gamma\subset Q^*_c(z_0),\text{ where }Q_c=\sum_{j=2}^{N-n+3}c_jQ_j\}.$$
Hence, $V_{2\Gamma'}$ is a subspace of $\C^{N-n+2}$. Since $\dim \left(\bigcap_{i=1}^{N-n+3}Q_i^*(z_0)\right)\le n-3$, there exists $i, (2\le i\le N-n+3)$ such that $\Gamma'\not\subset Q_i^*(z_0)$. This implies that $V_{2\Gamma'}$ is a proper subspace of $\C^{N-n+2}$. Since the set of irreducible components of dimension $n-2$ of $\left(P_1^*(z_0)\cap P_2^*(z_0)\right)$ is at most countable, 
$$ \C^{N-n+2}\setminus\bigcup_{\Gamma'}V_{2\Gamma'}\ne\emptyset. $$
Then, there exists $(c_{22},...,c_{2(N-n+3)})\in\C^{N-n+2}$ such that
$$ \Gamma'\not\subset P_3^*(z_0) $$
for all irreducible components of dimension $n-2$ of $P_1^*(z_0)\cap P_2^*(z_0)$, where
$P_3=\sum_{j=2}^{N-n+3}c_{2j}Q_j.$
It is clear that $\dim \left(P_1^*(z_0)\cap P_2^*(z_0)\cap P_3^*(z_0)\right)\le n-3.$

Repeating again the above step, after the $n$-th step we get the hypersurfaces $P_2,...,P_{n+1}$ stisfying that
$$ \dim\left(\bigcap_{j=1}^tP_j^*(z_0)\right)\le n-t. $$
In particular, $\left(\bigcap_{j=1}^{n+1}P_j^*(z_0)\right)=\emptyset.$ This yields that $P_1,...,P_{n+1}$ are in weakly general position. We complete the proof of the lemma.
\end{proof}

\begin{proof}[{\sc Proof of Theorem \ref{thm1.1}}]
Replacing $Q_i$ by $Q_i^{d/d_i}$ if neccesary with the note that 
$$\dfrac{1}{d}N^{[L_0]}(r,f^*Q_i^{d/d_i})\le\frac{1}{d_i}N^{[L_j]}(r,f^*Q_i),$$
we may assume that all hypersurfaces $Q_i\ (1\le i\le q)$ are of the same degree $d$. We may also assume that $q>(N-n+1)(n+1)$. 

Consider a reduced representation $\tilde f=(f_0,\ldots ,f_n): \C\rightarrow \C^{n+1}$ of $f$. We also note that 
$$N^{[L_0]}_{Q_i(\tilde f)}(r)=N^{[L_0]}_{\tilde Q(\tilde f)}(r)+o(T_f(r)).$$
Then without loss of generality we may assume that $Q_i\in\mathcal K_f[x_0,...,x_n]$.

We set 
$$ \mathcal I=\{(i_1,...,i_{N+1})\ ; 1\le i_j\le q, i_j\ne i_t\ \forall j\ne t\}. $$
For each $I=(i_1,...,i_{N+1})\in\mathcal I$, we denote by $P_{I1},...,P_{I(n+1)}$ the hypersurfaces obtained in Lemma \ref{lem3.1} with respect to the family of hypersurfaces $\{Q_{i_1},...,Q_{i_{N+1}}\}$. It is easy to see that there exists a positive function $h\in\mathcal C_{f}$ such that
$$ |P_{It}(\omega)|\le h\max_{1\le j\le N+1-n+t}|Q_{i_j}(\omega)|, $$
for all $I\in\mathcal I$ and $\omega=(\omega_0,...,\omega_n)\in\C^{n+1}$.

For a fixed point $z\in \C\setminus\bigcup_{i=1}^qQ_i(\tilde f)^{-1}(\{0,\infty\})$.  We may assume that
$$ |Q_{i_1}(\tilde f)(z)|\le |Q_{i_2}(\tilde f)(z)|\le\cdots\le |Q_{i_{q}}(\tilde f)(z)|. $$
Let $I=(i_1,...,i_{N+1})$. Since $P_{I1},\ldots,P_{I(n+1)}$ are in weakly general position,  there exist functions $g_0,g\in\mathcal C_{f}$, which may be chosen independent of $I$ and $z$, such that
$$ ||\tilde f (z)||^d\le g_0(z)\max_{1\le j\le n+1}|P_{Ij}(\tilde f)(z)|\le g(z)|Q_{i_{N+1}}(z)|.$$
Therefore, we have
\begin{align*}
\prod_{i=1}^q\dfrac{||\tilde f (z)||^d}{|Q_i(\tilde f)(z)|}&\le g^{q-N}(z)\prod_{j=1}^{N}\dfrac{||\tilde f (z)||^d}{|Q_{i_j}(\tilde f)(z)|}\\
&\le g^{q-N}(z)h^{n-1}(z)\dfrac{||\tilde f (z)||^{Nd}}{\bigl (\prod_{j=2}^{N-n+1}|Q_{i_j}(\tilde f)(z)|\bigl )\cdot\prod_{j=1}^{n}|P_{Ij}(\tilde f)(z)|}\\
&\le g^{q-N}(z)h^{n-1}(z)\dfrac{||\tilde f (z)||^{Nd}}{|P_{I1}(\tilde f)(z)|^{N-n+1}\cdot\prod_{j=2}^{n}|P_{Ij}(\tilde f)(z)|}\\
&\le g^{q-N}(z)h^{n-1}(z)\zeta^{(N-n)(n-1)}(z)\dfrac{||\tilde f (z)||^{Nd+(N-n)(n-1)d}}{\prod_{j=1}^{n}|P_{Ij}(\tilde f)(z)|^{N-n+1}},
\end{align*}
where $I=(i_1,...,i_{N+1})$ and $\zeta$ is a function in $\mathcal C_{f}$, which is chosen common for all $I\in\mathcal I$, such that 
$$|P_{Ij}(z)(\omega)|\le \zeta (z) ||\omega||^d, \ \forall \omega=(\omega_0,...,\omega_n)\in\C^{n+1}.$$
The above inequality implies that
\begin{align}\label{3.2}
\log \prod_{i=1}^q\dfrac{||\tilde f (z)||^d}{|Q_i(\tilde f)(z)|}\le \log(g^{q-N}h^{n-1}\zeta^{(N-n)(n-1)})(z)+(N-n+1)\log \dfrac{||\tilde f (z)||^{nd}}{\prod_{j=1}^{n}|P_{Ij}(\tilde f)(z)|}.
\end{align}
Now, for each nonegative integer $L$, we denote by $V_L$ the vector space (over $\mathcal K_{\{Q_i\}}$) consisting of all homogeneous polynomials of degree $L$ in $\mathcal K_{\{Q_i\}}[x_0,\ldots ,x_n]$ and the zero polynomial. Denote by $(P_{I1},\ldots ,P_{In})$ the ideal in $\mathcal K_{\{Q_i\}}[x_0,\ldots ,x_n]$ generated by $P_{I1},\ldots ,P_{In}$.
\begin{lemma}[{see \cite[Lemma 5]{AP}, \cite[Proposition 3.3]{DT}}]\label{3.3}
Let $\{P_i\}_{i=1}^q\ (q\ge n+1)$ be a set of homogeneous polynomials of common degree $d\ge 1$ in $\mathcal K_f[x_0,\ldots ,x_n]$ in weakly general position. Then for any nonnegative integer $N$ and for any $J:=\{j_1,\ldots ,j_n\}\subset\{1,\ldots ,q\},$ the dimension of the vector space $\frac{V_L}{(P_{j_1},\ldots ,P_{j_n})\cap V_L}$ is equal to the number of $n$-tuples $(s_1,\ldots ,s_n)\in\mathbf N^n_0$ such that $s_1+\cdots +s_n\le L$ and $0\le s_1,\ldots,s_n\le d-1 $. In particular, for all $L\ge n(d-1)$, we have
$$ \dim\frac{V_L}{(P_{j_1},\ldots ,P_{j_n})\cap V_L}=d^n. $$
\end{lemma}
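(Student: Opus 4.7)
The approach is to prove the dimension formula via the Hilbert series of the graded quotient algebra, exploiting the regular sequence property from Lemma \ref{2.6}. The key observation is that $\mathcal K_{\{Q_i\}}$ is a field (being defined as a subfield of $\mathcal M$), so the graded $\mathcal K_{\{Q_i\}}$-vector space structure is well-behaved and admits a clean Hilbert-series analysis.

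Set $R := \mathcal K_{\{Q_i\}}[x_0,\ldots,x_n]$ with its natural grading and $R_k := R/(P_{j_1},\ldots,P_{j_k})$ for $0 \le k \le n$. Write $H_k(t) := \sum_{L\ge 0}\dim_{\mathcal K_{\{Q_i\}}}(R_k)_L \cdot t^L$; the base case $H_0(t)=1/(1-t)^{n+1}$ is the standard formula for a polynomial ring in $n+1$ variables over a field. For the inductive step, Lemma \ref{2.6} guarantees that $\{P_{j_1},\ldots,P_{j_n}\}$ is a regular sequence in $R$, so $P_{j_{k+1}}$ is not a zero-divisor in $R_k$. This yields the short exact sequence of graded $R$-modules
\[
0 \longrightarrow R_k(-d) \xrightarrow{\;\cdot P_{j_{k+1}}\;} R_k \longrightarrow R_{k+1} \longrightarrow 0,
\]
whence $H_{k+1}(t) = (1-t^d)H_k(t)$. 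Iterating gives
\[
H_n(t) \;=\; \frac{(1-t^d)^n}{(1-t)^{n+1}} \;=\; \frac{(1+t+\cdots+t^{d-1})^n}{1-t}.
\]
The coefficient of $t^L$ in $H_n(t)$ equals $\sum_{k=0}^{L}[t^k](1+t+\cdots+t^{d-1})^n$, which counts tuples $(s_1,\ldots,s_n) \in \mathbf N_0^n$ satisfying $s_1+\cdots+s_n \le L$ and $0 \le s_i \le d-1$ for each $i$. This is precisely the first assertion. The ``in particular'' case then follows because for $L \ge n(d-1)$ every tuple with $0 \le s_i \le d-1$ automatically satisfies $s_1+\cdots+s_n \le n(d-1) \le L$, so all $d^n$ such tuples are counted.

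The only nontrivial ingredient is the regular sequence property, which is already handled by Lemma \ref{2.6}; beyond that, the argument is routine Hilbert-series bookkeeping. One minor point to verify is the identification of $V_L$ (homogeneous polynomials of degree $L$ over $\mathcal K_{\{Q_i\}}$) with the graded piece $(R)_L$, and hence $V_L/((P_{j_1},\ldots,P_{j_n})\cap V_L) = (R_n)_L$; this is immediate from the grading compatibility since each $P_{j_k}$ has pure degree $d$. Thus no real obstacle arises once one accepts the regular sequence input.
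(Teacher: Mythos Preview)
Your argument is correct. The Hilbert-series computation via the short exact sequences coming from the regular-sequence property is the standard route to this dimension formula, and you have invoked Lemma~\ref{2.6} at exactly the right place. The identification $(R_n)_L = V_L/((P_{j_1},\ldots,P_{j_n})\cap V_L)$ is indeed immediate from the homogeneity of the $P_{j_k}$.

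Note, however, that the paper does not supply its own proof of this lemma: it is quoted verbatim from \cite[Lemma~5]{AP} and \cite[Proposition~3.3]{DT}, as the attribution in the statement indicates. So there is no ``paper's own proof'' to compare against here. For what it is worth, the argument in \cite{DT} proceeds somewhat differently: rather than computing the Hilbert series globally, it builds explicit bases by showing (via Lemma~\ref{2.5}) that the monomials $x_0^{s_0}x_1^{s_1}\cdots x_n^{s_n}$ with $s_0+\cdots+s_n=L$ and $0\le s_1,\ldots,s_n\le d-1$ map to a basis of the quotient. Your approach is cleaner and more conceptual, trading the explicit basis for the generating-function identity; both rely on the same regular-sequence input.

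One small point: the lemma as stated places the $P_i$ in $\mathcal K_f[x_0,\ldots,x_n]$, while $V_L$ and the ideal are taken over $\mathcal K_{\{Q_i\}}$. Your proof works uniformly over any field containing the coefficients of the $P_{j_k}$, so this discrepancy (which is really a notational looseness in the paper) causes no difficulty.
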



For each positive integer $L$ divisible by $d$ and for each $(i)=(i_1,\ldots,i_n)\in\mathbf N^n_0$ with $||(i)||=\sum_{s=1}^ni_s\le\frac{L}{d}$, we set
$$W^I_{(i)}=\sum_{(j)=(j_1,\ldots ,j_n)\ge (i)}P_{I1}^{j_1}\cdots P_{In}^{j_n}\cdot V_{L-d||(j)||}. $$
It is clear that $W^I_{(0,\ldots,0)}=V_L$ and $W^I_{(i)}\supset W^I_{(j)}$ if $(i)<(j)$ in the lexicographic ordering. Hence, $W^I_{(i)}$ is a filtration of $V_L$.
  
Let $(i)=(i_1,\ldots ,i_n),(i')=(i_1',\ldots ,i_n')\in \mathbf N^n_0$. Suppose that $(i')$ follows $(i)$ in the lexicographic ordering. 
We consider the following vector space homomorphism
$$ \varphi: \gamma\in V_{L-d||(i)||}\mapsto [P_{I1}^{i_1}\cdots P_{In}^{i_n}\gamma]\in\dfrac{W^I_{(i)}}{W^I_{(i')}}, $$
where $[P_{I1}^{i_1}\cdots P_{In}^{i_n}\gamma]$ is the equivalent class in $\frac{W^I_{(i)}}{W^I_{(i')}}$ containing $P_{I1}^{i_1}\cdots P_{In}^{i_n}\gamma$.
We see that $\varphi$ is surjective. We will show that $\ker\varphi$ is equal to $(P_{I1},\ldots ,P_{In})\cap V_{L-d||(i)||}$.

In fact, for any $\gamma\in\ker\varphi$, we have
\begin{align*}
P_{I1}^{i_1}\cdots P_{In}^{i_n}\gamma& =\sum_{(j)=(j_1,\ldots,j_n)\ge (i')}P_{I1}^{j_1}\cdots P_{In}^{j_n}||(j)||\\
&= \sum_{(j)=(j_1,\ldots,j_n)> (i)}P_{I1}^{j_1}\cdots P_{In}^{j_n}||(j)||,
\end{align*}
where $||(j)||\in V_{L-d||(j)||}$. By Lemma \ref{2.5} and Lemma \ref{2.6}, we have $\gamma\in (P_{I1},\ldots ,P_{In})$. Then
$$ \ker\varphi\subset (P_{I1},\ldots ,P_{In})\cap V_{L-d||(i)||}. $$
Conversely, for any $\gamma\in (P_{I1},\ldots ,P_{In})\cap V_{L-d||(i)||},\ (\gamma\ne 0)$, we have
$$ \gamma =\sum_{s=1}^nP_{Is} h_s,\ \ h_s\in V_{L-d(||(i)||+1)}. $$
It implies that
$$ \varphi (\gamma)=\sum_{s=1}^n[P_{I1}^{i_1}\cdots P_{I{s-1}}^{i_s-1}P_{Is}^{i_s+1}P_{I{s+1}}^{i_s+1}\cdots P_{In}^{i_n}h_s]. $$
It is clear that $P_{I1}^{i_1}\cdots P_{I{s-1}}^{i_s-1}P_{Is}^{i_s+1}P_{I{s+1}}^{i_s+1}\cdots P_{In}^{i_n}h_s\in W^I_{(i')}$, and hence $\varphi (\gamma)=0$, i.e., $\gamma\in\ker\varphi$. Therefore, we have
$$\ker\varphi = (P_{I1},\ldots ,P_{In})\cap V_{L-d||(i)||}.$$
This yields that
\begin{align}\label{3.4}
\dim \dfrac{W^I_{(i)}}{W^I_{(i')}}=\dim \dfrac{V_{L-d||(i)||}}{(P_{I1},\ldots ,P_{In})\cap V_{L-d||(i)||}}.
\end{align}

Fix a number $L$ large enough (chosen later). Set $u=u_L:=\dim V_L=\binom{L+n}{n}$. We assume that 
$$ V_L=W^I_{(i)_1}\supset W^I_{(i)_2}\supset\cdots\supset W^I_{(i)_K}, $$
where $W^I_{(i)_{s+1}}$ follows $W^I_{(i)_s}$ in the ordering and $(i)_K=(\frac{L}{d},0,\ldots ,0)$. It is easy to see that $K$ is the number of $n$-tuples $(i_1,\ldots,i_n)$ with $i_j\ge 0$ and $i_1+\cdots +i_n\le\frac{L}{d}$. Then we have
$$ K =\binom{\frac{L}{d}+n}{n}.$$
For each $k\in\{1,\ldots ,K-1\}$ we set $m^I_k=\dim \frac{W^I_{(i)_k}}{W^I_{(i)_{k+1}}}$, and set $m^I_k=1$. Then by Lemma \ref{3.5}, $m^I_k$ does not depends on $\{P_{I1},\ldots ,P_{In}\}$ and $k$, but on $||(i)_k||$. Hence, we set $m_k=m^I_{||(i)_k||}$. We also note that
\begin{align}\label{3.5}
m_k=d^n
\end{align}
 for all $k$ with $L-d||(i)_k||\ge n(d-1)$ (it is equivalent to $||(i)_k||\le\dfrac{L}{d}-n$).

From the above filtration, we may choose a basis $\{\psi^I_1,\cdots,\psi^I_u\}$ of $V_L$ such that  
$$\{\psi_{u-(m_s+\cdots +m_K)+1},\ldots ,\psi^I_u\}$$
 is a basis of $W^I_{(i)_s}$. For each $k\in\{1,\ldots,K\}$ and $l\in\{u-(m_k+\cdots +m_k)+1,\ldots, u-(m_{k+1}+\cdots +m_k)\}$, we may write
$$ \psi^I_l=P_{I1}^{i_{1k}}\cdots P_{In}^{i_{nk}}h_l,\ \text{ where } (i_{1k},\ldots,i_{nk})=(i)_k, h_l\in W^I_{L-d||(i)_k||}. $$
Then we have
\begin{align*}
|\psi^I_l(\tilde f)(z)|&\le |P_{I1} (\tilde f)(z)|^{i_{1k}}\cdots |P_{In} (\tilde f)(z)|^{i_{nk}}|h(\tilde f)(z)|\\
& \le c_l|P_{I1} (\tilde f)(z)|^{i_{1k}}\cdots |P_{In} (\tilde f)(z)|^{i_{nk}}||\tilde f(z)||^{L-d||(i)_k||}\\
&=c_l\left (\dfrac{|P_{I1} (\tilde f)(z)|}{||\tilde f(z)||^d}\right)^{i_{1k}}\cdots\left (\dfrac{|P_{In} (\tilde f)(z)|}{||\tilde f(z)||^d}\right)^{i_{nk}}||\tilde f (z)||^L,
\end{align*} 
where $c_l\in\mathcal C_{f}$, which does not depen on $f$ and $z$. Taking the product the both sides of the above inequalities over all $l$ and then taking logarithms, we obtain
\begin{align}\label{3.6}
\begin{split}
\log\prod_{l=1}^u|\psi^I_l(\tilde f)(z)|&\le\sum_{k=1}^Km_k\left (i_{1k}\log\dfrac{|P_{I1} (\tilde f)(z)|}{||\tilde f(z)||^d}+\cdots+i_{nk}\log\dfrac{|P_{In} (\tilde f)(z)|}{||\tilde f(z)||^d}\right)\\ 
& \ \ \ +uL\log ||\tilde f (z)||+\log c_I(z),
\end{split}
\end{align}
where $c_I=\prod_{l=1}^qc_l\in\mathcal C_{f}$, which does not depend on $f$ and $z$.

We see that
$$ \sum_{k=1}^Km_ki_{sk}=\sum_{l=0}^{\frac{L}{d}}\sum_{k:||(i)_k||=l}m(l)i_{sk}=\sum_{l=0}^{\frac{L}{d}}m(l)\sum_{k:||(i)_k||=l}i_{sk}. $$
Note that, by the symetry $(i_1,\ldots,i_n)\rightarrow (i_{\sigma (1)},\ldots ,i_{\sigma (n)})$ with $\sigma\in S(n)$,  $\sum_{k:||(i)_k||=l}i_{sk}$ does not depend on $s$. We set 
$$ A:= \sum_{k=1}^Km_ki_{sk},\ \text{ which is independent of $s$ and $I$}.$$
Hence, (\ref{3.6}) gives
\begin{align*}
\log\prod_{l=1}^u|\psi^I_l(\tilde f)(z)|&\le A\left (\log\prod_{i=1}^n\dfrac{|P_{Ii}(\tilde f)(z)|}{||\tilde f(z)||^d}\right)+uL\log ||\tilde f (z)||+\log c_I(z),
\end{align*}
i.e.,
\begin{align*}
A\left (\log\prod_{i=1}^n\dfrac{||\tilde f(z)||^d}{|P_{Ii}(\tilde f)(z)|}\right)\le\log\prod_{l=1}^u\frac{||\tilde f (z)||^L}{|\psi^I_l(\tilde f)(z)|}+\log c_I(z),
\end{align*}

Set $c_0=g^{q-N}h^{n-1}\zeta^{(N-n)(n-1)}\prod_{I}(1+c_I^{(N-n+1)/A})\in\mathcal C_{f}$. Combining the above inequality with (\ref{3.2}), we obtain that
\begin{align}\label{3.7}
\log \prod_{i=1}^q\dfrac{||\tilde f (z)||^d}{|Q_i(\tilde f)(z)|}\le \frac{N-n+1}{A}\log\prod_{l=1}^u\dfrac{||\tilde f (z)||^L}{|\psi^I_l(\tilde f)(z)|}+\log c_0.
\end{align}

We now write
$$ \psi^I_l=\sum_{J\in\mathcal T_N}c^I_{lJ}x^J\in V_L,\ \ c^I_{lJ}\in\mathcal K_{\{Q_i\}}, $$
where $\mathcal T_L$ is the set of all $(n+1)$-tuples $J=(i_0,\ldots,i_n)$ with $\sum_{s=0}^nj_s=L$, $x^J=x_0^{j_0}\cdots x_n^{j_n}$  and $l\in\{1,\ldots ,u\}$. For each $l$, we fix an index $J^I_l\in J$ such that $c^I_{lJ^I_l}\not\equiv 0$. Define
$$ \mu^I_{lJ}=\dfrac{c^I_{lJ}}{c^I_{lJ^I_l}},\  J\in\mathcal T_L.$$
Set $\Phi =\{\mu^I_{lJ};I\subset\{1,\ldots ,q\},\sharp I=n, 1\le l\le M, J\in\mathcal T_L\}$. Note that $1\in\Phi$. Let $B=\sharp\Phi$. We see that $B\le u\binom{q}{n}(\binom{L+n}{n}-1)=\binom{L+n}{n}(\binom{L+n}{n}-1)\binom{q}{n}$. For each positive integer $l$, we denote by $\mathcal L(\Phi (l))$ the linear span over $\C$ of the set 
$$\Phi (l)=\{\gamma_1\cdots\gamma_l;\gamma_i\in\Phi\}.$$
It is easy to see that
$$ \dim\mathcal L(\Phi(l))\le\sharp\Phi (l)\le\binom{B+l-1}{B-1}.$$
We may choose a positive integer $p$ such that
$$ p\le p_0:=[\dfrac{B-1}{\log (1+\frac{\epsilon}{3(n+1)(N-n+1)})}]^2\text{ and }\dfrac{\dim\mathcal L(\Phi (p+1))}{\dim\mathcal L(\Phi (p))}\le 1+\dfrac{\epsilon}{3(n+1)(N-n+1)}. $$
Indeed, if $\dfrac{\dim\mathcal L(\Phi (p+1))}{\dim\mathcal L(\Phi (p))}> 1+\dfrac{\epsilon}{3(n+1)(N-n+1)}$ for all $p\le p_0$, we have 
$$\dim\mathcal L(\Phi (p_0+1))\ge (1+\dfrac{\epsilon}{3(n+1)(N-n+1)})^{p_0}.$$ 
Therefore, we have
\begin{align*}
\log (1+\dfrac{\epsilon}{3(n+1)(N-n+1)})&\le\dfrac{\log \dim\mathcal L(\Phi (p_0+1))}{p_0}\le\dfrac{\log \binom{B+p_0}{B-1}}{p_0}\\ 
& =\dfrac{1}{p_0}\log \prod_{i=1}^{B-1}\dfrac{p_0+i+1}{i}<\dfrac{(B-1)\log (p_0+2)}{p_0}\\
&\le \dfrac{B-1}{\sqrt{p_0}}\le \dfrac{(B-1)\log (1+\frac{\epsilon}{3(n+1)(N-n+1)})}{B-1}\\
&=\log (1+\frac{\epsilon}{3(n+1)(N-n+1)}).
\end{align*}
This is a contradiction.

We fix a positive integer $p$ satisfying the above condition. Put $s=\dim\mathcal L(\Phi (p))$ and $t=\dim\mathcal L(\Phi (p+1))$. Let $\{b_1,\ldots,b_t\}$ be an $\C$-basis of $\mathcal L(\Phi (p+1))$ such that $\{b_1,\ldots ,b_s\}$ be a $\C$-basis of $\mathcal L(\Phi (p))$.

For each $l\in\{1,\ldots ,u\}$, we set 
$$ \tilde\psi^I_l =\sum_{J\in\mathcal T_N}\mu^I_{lJ}x^I.$$
For each $J\in\mathcal T_L$, we consider homogeneous polynomials $\phi_J(x_0,\ldots ,x_n)=x^J$. Let $F$ be a meromorphic mapping of $\C^m$ into $\P^{tu-1}(\C)$ with a reduced representation $\tilde F = (b_i\phi_J(\tilde f))_{1\le i\le t,J\in\mathcal T_N}$ be a meromorphic vector functions from $\C^m$ into $\C^{tu}$. Since $f$ is assumed to be algebraically nondegenerate over $\mathcal K_{\{Q_i\}}$, $F$ is linearly nondegenerate over $\C$.  We see that there exist nonzero functions $c_1,c_2\in\mathcal C_{f}$ such that 
$$c_1||\tilde f||^L\le ||\tilde F||\le c_2||\tilde f||^L.$$

For each $l\in\{1,\ldots ,u\}, 1\le i\le s$, we consider the linear form $L^I_{il}$ in $x^J$ such that 
$$ b_i\tilde\psi^I_l(\tilde f)=L^I_{il}(\tilde F). $$
Since $f$ is algebraically nondegenerate over $\mathcal K_{\{Q_i\}}$, it is easy to see that $\{b_i\tilde\psi^I_l(\tilde f); 1\le i\le s,1\le l\le M\}$ is linearly independently over $\C$, and so is $\{L^I_{il}(\tilde F);1\le i\le s,1\le l\le M\}$. This yields that $\{L^I_{il};1\le i\le s,1\le l\le M\}$ is linearly independent over $\C$. We also see that
\begin{align*}
s\log\prod_{l=1}^u\dfrac{||\tilde f (z)||^L}{|\psi^I_l(\tilde f)(z)|}&=\log\prod_{\overset{1\le l\le u}{1\le i\le s}}\dfrac{||\tilde F (z)||}{|b_i\psi^I_l(\tilde f)(z)|}+\log c_3(z)\\
&=\log\prod_{\overset{1\le l\le u}{1\le i\le s}}\dfrac{||\tilde F (z)||\cdot ||L^I_{il}||}{|L^I_{il}(\tilde F)(z)|}+\log c_4(z),
\end{align*}
where $c_3,c_4$ are nonzero functions in $\mathcal C_{f}$, not depend on $f$ and $I$, but on $\{Q_i\}_{i=1}^q$.
Combining this inequality and (\ref{3.7}), we obtain that
\begin{align}\label{3.8}
\log \prod_{i=1}^q\dfrac{||\tilde f (z)||^d}{|Q_i(\tilde f)(z)|}\le \frac{N-n+1}{sA}\left (\max_{I}\log\prod_{\overset{1\le l\le u}{1\le i\le s}}\dfrac{||\tilde F (z)||\cdot ||L^I_{il}||}{|L^I_{il}(\tilde F)(z)|}+\log c_4(z)\right)+\log c_0(z).
\end{align}

Since $\tilde F$ is linearly nondegenerate over $\C$, there exists an admissible set $\alpha =(\alpha_{iJ})_{\overset{1\le i\le t}{J\in\mathcal T_N}}$ with $\alpha_{iJ}\in \Z_+^m$, $||\alpha_{iJ}||\le tu-1.$ such that
$$ W^{\alpha}(b_i\tilde\phi_J(\tilde f))=\det\left (\mathcal D^{\alpha_{i'J'}}(b_i\tilde\phi_J(\tilde f))\right)\not\equiv 0.$$
By Theorem \ref{2.4}, we have
\begin{align}\label{3.9}
\Big|\Big|\ \int\limits_{S(r)}\max_{I}\left\{\log\prod_{\overset{1\le l\le u}{1\le i\le s}}\dfrac{||\tilde F (z)||\cdot ||L^I_{il}||}{|L^I_{il}(\tilde F)(z)|}\right\}&\le tuT_F(r)-N_{W^{\alpha}(b_i\tilde\phi_J(\tilde f))}(r)+o(T_F(r)).
\end{align}
Integrating both sides of (\ref{3.8}) and using (\ref{3.9}), we obtain that 
\begin{align}\label{3.10}
\begin{split}
qdT_f(r)-\sum_{i=1}^qN(r,f^*Q_i)\le&\frac{tu(N-n+1)}{sA}T_F(r)-\frac{N-n+1}{sA}N_{W^{\alpha}(b_i\tilde\phi_J(\tilde f))}(r)\\
&+o(T_F(r)+T_f(r)).
\end{split}
\end{align}

We now estimate the quantity $\sum_{i=1}^qN(r,f^*Q_i)-\frac{N-n+1}{sA}N_{W^{\alpha}(b_i\tilde\phi_J(\tilde f))}(r)$. Fix a point $z_0\in\C^m$. Without lose of generality, we may assume that
$$ \nu_{Q_1(\tilde f)}(z_0)\ge\cdots\ge\nu_{Q_N(\tilde f)}(z_0)\ge\cdots\ge\nu_{Q_q(\tilde f)}(z_0). $$
First, we recall that
$$ Q_i(x)=\sum_{J\in\mathcal T_d}a_{iJ}x^I\in\mathcal K_{\{Q_i\}}[x_0,\ldots ,x_n]. $$
Let $T=(\cdots ,t_{kJ},\cdots )\ (k\in\{1,\ldots ,q\}, J\in\mathcal T_d)$ be a family of variables and
$$  Q^T_i=\sum_{J\in\mathcal T_d}T_{iJ}x^I\in\mathbf Z[T,x],\ \ i=1,\ldots ,q. $$
For each ordered subset $I=(i_1,...,i_{N+1})\subset\{1,\ldots ,q\}$, we denote by $\tilde R_I\in\mathbf Z[T]$ the resultant of $\{Q^T_i\}_{i\in I}$. Then there exists a positive integer $\lambda$ (common for all $I$) and polynomials $\tilde b_{ij}^I\ (0\le i\le n, j\in I)$ in $\mathbf Z[T,x]$, which are zero or homogeneous in $x$ with degree of $\lambda -d$ such that 
$$ x_i^\lambda\cdot\tilde R_I=\sum_{j\in I}\tilde b_{ij}^IQ^T_j\ \text{ for all }i\in\{1,\ldots ,q\},$$
and $R_I=\tilde R_I(\ldots, a_{kJ},\ldots )\not\equiv 0$. We see that $R_I\in\mathcal K_{f}$. Set
$$b^I_{ij}=\tilde b^I_{ij}((\ldots ,a_{jJ},\ldots ),(x_0,\ldots ,x_n)).$$
Then we have
$$ f^\lambda_i\cdot R_I=\sum_{j\in H}b^I_{ij}(\tilde f)Q_j(\tilde f)\ \text{ for all }i\in\{0,\ldots ,n\}.$$
This implies that
$$ \nu_{R_I}\ge\min_{j\in I}\nu_{Q_j(\tilde f)}+\min_{0\le i\le n, j\in I}\nu_{b^I_{ij}(\tilde f)}.$$
We set $R=\prod_{I\subset\{1,\ldots ,q\}}R_I\in\mathcal K_{\{Q_i\}}$. It is easy to see that
$$ \nu_{b^I_{ij}(\tilde f)}\ge O(\min_{k,J}\nu_{a_{kJ}}), $$
and the left hand side of this inequality is only depend on $\{Q_i\}$. Then it implies that there exists a constant $c$, which depends only on $\{Q_i\}$, such that
$$ \min_{j\in I}\nu_{Q_j(f)}\le \nu_{R}-c\min_{k,J}\nu_{a_{kJ}}, $$
for each ordered subset $I\subset\{1,\ldots ,q\}$ with $\sharp I=N+1$.

Now, we let $I=\{1,...,N+1\}\subset\{1,\ldots ,q\}$. Then
$$ \nu_{Q_{j}(f)}(z_0)\le \nu_{R}(z_0)-c\min_{k,J}\nu_{a_{kJ}}(z_0),\ j=N+1,...,q. $$
Also it is easy to see that
$$ \nu_{Q_{N-n+i}(\tilde f)}(z_0)\le\nu_{P_{Ii}}(z_0),$$
and hence
$$ \nu_{Q_{N-n+i}(\tilde f)}(z_0)-\nu^{[tu-1]}_{Q_{N-n+i}(\tilde f)}(z_0)\le \nu_{P_{Ii}}(z_0)-\nu^{[tu-1]}_{P_{Ii}}(z_0),\ i=2,...,n.$$
Therefore,
\begin{align}\nonumber
\sum_{i=1}^q(\nu_{Q_i(\tilde f)}(z_0)-\nu^{[tu-1]}_{Q_{i}(\tilde f)}(z_0))
&\le (N-n+1)(\nu_{Q_1(\tilde f)}(z_0)-\nu^{[tu-1]}_{Q_1(\tilde f)}(z_0))\\
\begin{split}\label{3.11}
&+\sum_{i=2}^n(\nu_{P_{Ii}(\tilde f)}(z_0)-\nu^{[tu-1]}_{P_{Ii}(\tilde f)}(z_0))+(q-N)\nu_{Q_{N+1}(\tilde f)}(z_0)\\
&\le (N-n+1)\sum_{i=1}^n(\nu_{P_{Ii}(\tilde f)}(z_0)-\nu^{[tu-1]}_{P_{Ii}(\tilde f)}(z_0))\\
&+(q-N)(\nu_{R}(z_0)-c\min_{k,J}\nu_{a_{kJ}}(z_0)).
\end{split}
\end{align} 

Take linear forms $h^I_{il}$ in $x^J$, $1\le l\le u, s+1\le i\le t, J\in\mathcal T_L$ such that $\{L^I_{il};1\le l\le u, 1\le i\le s\}\cup\{h^I_{il}; 1\le l\le u, s+1\le i\le t\}$ is linearly independent over $\C$. Moreover, we easily see that
\begin{align}\label{3.12}
\begin{split}
\nu_{W^{\alpha}(b_i\tilde\phi_J(\tilde f))}(z_0)&=\nu_{W^{\alpha}(L^I_{il}(\tilde f),...,h^I_{il}(\tilde f))}(z_0)\\
&\ge\sum_{\overset{1\le l\le u}{1\le i\le s}}\left (\nu_{L^I_{il}(\tilde f)}(z_0)-\nu^{[tu-1]}_{L^I_{il}(\tilde f)}(z_0)\right )\\
&\ge \sum_{\overset{1\le l\le u}{1\le i\le s}}\left (\nu_{b_i\tilde\psi^I_{il}(\tilde f)}(z_0)-\nu^{[tu-1]}_{b_i\tilde\psi^I_{il}(\tilde f)}(z_0)\right )\\
&\ge \sum_{\overset{1\le l\le u}{1\le i\le s}}\left (\nu_{\tilde\psi^I_{il}(\tilde f)}(z_0)-\nu^{[tu-1]}_{\tilde\psi^I_{il}(\tilde f)}(z_0)\right )-C\max_{1\le i\le s}\nu^{\infty}_{b_i}(z_0),
\end{split}
\end{align}
where $C$ is a positive constant, which is chosen indenpendently of $I$, since there are only finite ordered subset $I$.

Now for integers $x,y$, we easily see that 
\begin{align*}
\max\{0,x+y-L\}\ge \max\{0,x-L\}+\min\{0,y\}.
\end{align*} It yields that
\begin{align}\label{3.13}
\nu_{\varphi_1\varphi_2}(z)-\nu_{\varphi_1\varphi_2}^{[L]}(z)\ge\nu_{\varphi_1}(z)-\nu_{\varphi_1}^{[L]}(z)-\nu^{\infty}_{\varphi_2}(z),
\end{align}
for every nonzero meromorphic functions $\varphi_1,\varphi_2$. If let $x_1,\ldots ,x_{k_1}$ be $k_1$ nonegative integers and let $y_1,\ldots ,y_{k_2}$ be $k_2$ negative integers, then we have the following estimate
\begin{align*}
\max\{0,x_1&+\cdots +x_{k_1}+y_1+\cdots +y_{k_2}-L\} \ge\sum_{i=1}^{k_1}\max\{0,x_i-L\}+\sum_{i=1}^{k_2}\min\{0,y_i\}\\
&=\sum_{i=1}^{k_1}(\max\{0,x_i-L\}+\min\{0,x_i\})+\sum_{i=1}^{k_2}(\max\{0,y_i-L\}+\min\{0,y_i\}).
\end{align*}
This yields that
\begin{align}\label{3.14}
\nu_{\prod_{i=1}^k\varphi_i}(z)-\nu^{[L]}_{\prod_{i=1}^k\varphi_i}(z)\ge\sum_{i=1}^k(\nu_{\varphi_i}(z)-\nu^{[L]}_{\varphi_i}(z)-\nu^{\infty}_{\varphi_i}(z)),
\end{align}
for any meromorphic functions $\varphi_i \ (1\le i\le k)$.

For each $1\le l\le u, 1\le i\le s$ we have
\begin{align*}
\tilde\psi^I_{l}(\tilde f)=\frac{1}{c^I_{lJ_l^I}}\prod_{j=1}^nP_{Ij}^{i_{jk}}(\tilde f)h_l(\tilde f),
\end{align*}
where $(i_{1k},\ldots ,i_{nk})=I_k, h_l\in V_{L-d||(i)_k||}$ and $h_l$ is independent of $f$.
Now using (\ref{3.13}) and (\ref{3.14}), we have
\begin{align*}
\nu_{\tilde\psi^I_{il}(\tilde f)}(z_0)-\nu^{[tu-1]}_{\tilde\psi^I_{il}(\tilde f)}(z_0)&\ge\nu_{\prod_{j=1}^nP^{i_{jk}}_{Ij}(\tilde f)}(z)-\nu^{[tu-1]}_{\prod_{j=1}^nP^{i_{jk}}_{Ij}(\tilde f)}(z)-\nu_{c^I_{lJ_l^I}}(z_0)\\ 
& \ge\sum_{j=1}^ni_{jk}(\nu_{P_{Ij}(\tilde f)}(z)-\nu^{[tu-1]}_{P_{Ij}(\tilde f)}(z))-c_1\max_{j,J}\nu_{a_{jJ}}(z_0).
\end{align*}
where $c_1$ is a constant, which depends only on $\{Q_i\}, t$ and $L$. Summing-up both sides of the above inequalities over all $1\le i\le u,1\le l\le s$, we get
\begin{align}
\nonumber
\sum_{\overset{1\le i\le u}{1\le l\le s}}(\nu_{\tilde\psi^I_{il}(\tilde f)}(z_0)-\nu^{[tu-1]}_{\tilde\psi^I_{il}(\tilde f)}(z_0))\ge&\sum_{j=1}^ns\sum_{k=1}^Km^I_ki_{jk}(\nu_{P_{Ij}(\tilde f)}(z_0)-\nu^{[tu-1]}_{P_{Ij}(\tilde f)}(z_0))-c_{2}\max_{j,J}\nu_{a_{jJ}},\\
\label{3.15}
=&As\sum_{j=1}^n(\nu_{P_{Ij}(\tilde f)}(z)-\nu^{[tu-1]}_{P_{Ij}(\tilde f)}(z))-c_{2}\max_{j,J}\nu_{a_{jJ}},
\end{align}
where $c_{2}$ is a constant, which depends only on $\{Q_i\}, t$ and $L$.

Combining (\ref{3.12}) and (\ref{3.15}), we get
\begin{align*}
\nu_{W^{\alpha}(b_i\tilde\phi_J(\tilde f))}(z_0)\ge As\sum_{j=1}^n(\nu_{P_{Ij}(\tilde f)}(z)-\nu^{[tu-1]}_{P_{Ij}(\tilde f)}(z))-c_{2}\max_{j,J}\nu_{a_{jJ}} -C\max_{1\le i\le s}\nu^{\infty}_{b_i}(z_0).
\end{align*} 
Combining (\ref{3.11}) and this inequality, we obtain
\begin{align*}
\frac{N-n+1}{As}\nu_{W^{\alpha}(b_i\tilde\phi_J(\tilde f))}(z_0)\ge&\sum_{i=1}^q(\nu_{Q_i(\tilde f)}(z_0)-\nu^{[tu-1]}_{Q_{i}(\tilde f)}(z_0))-(N-n+1)(q-N)(\nu_{R}(z_0)\\
+&c\max_{1\le k\le q}\nu_{a_{kJ_k}}(z_0))-\frac{N-n+1}{As}(c_{2}\max_{j,J}\nu_{a_{jJ}} +C\max_{1\le i\le s}\nu^{\infty}_{b_i}(z_0)).
\end{align*}
Integrating both sides of the above inequality, we obtain that
$$||\ \frac{N-n+1}{As}N_{W^{\alpha}(b_i\tilde\phi_J(\tilde f))}(r)\ge \sum_{i=1}^q(N_{Q_i(\tilde f)}(r)-N^{[tu-1]}_{Q_{i}(\tilde f)}(r))+o(T_f(r)).$$
From this inequality and (\ref{3.10}) with a note that $T_F(r)=LT_f(r)+o(T_f(r))$, we have
\begin{align}\label{3.16}
||\ (q-\frac{tuL(N-n+1)}{dAs})T_f(r)\le\sum_{i=1}^q\frac{1}{d}N^{[tu-1]}(r,f^*Q_i)+o(T_f(r)).
\end{align}

Now we give some estimates for $A$, $t$ and $s$. For each $I_k=(i_{1k},\ldots ,i_{nk})$ with $||(i)_k||\le \frac{L}{d}-n$, we set 
$$i_{(n+1)k}=\dfrac{L}{d}-n-\sum_{s=1}^ni_s.$$
 Since the number of nonnegative integer $p$-tuples with summation $\le T$ is equal to the number of nonnegative
integer $(p+1)$-tuples with summation exactly equal to $T\in\mathbf Z$, which is $\binom{T+n}{n}$, and since the sum below is independent of $s$, we have
\begin{align*}
A&=\sum_{||(i)_k||\le\frac{L}{d}}m^I_ki_{sk}\ge \sum_{||(i)_k||\le\frac{L}{d}-n}m^I_ki_{sk}=\dfrac{d^n}{n+1}\sum_{||(i)_k||\le\frac{L}{d}-n}\sum_{s=1}^{n+1}i_{sk}\\
&=\dfrac{d^n}{n+1}\cdot \binom{\frac{L}{d}}{n}\cdot (\dfrac{L}{d}-n)=d^n\binom{\frac{L}{d}}{n+1}.
\end{align*}

Now, for every positive number $x\in[0,\frac{1}{(n+1)^2}]$, we have
\begin{align}\label{new1.2}
\begin{split}
(1+x)^{n}&=1+nx+\sum_{i=2}^n\binom{n}{i}x^{i}\le 1+nx+\sum_{i=1}^2\frac{n^{i}}{i!(n+1)^{2i-2}}x\\
&=1+nx+\sum_{i=2}^n\frac{1}{i!}x\le 1+(n+1)x.
\end{split}
\end{align}
We chose $L= (n+1)d+2(N-n+1)(n+1)^3I(\epsilon^{-1})d$. Then $L$ is divisible by $d$ and we have
\begin{align}\label{new1.3}
\frac{(n+1)d}{L-(n+1)d}=\frac{(n+1)d}{2(N-n+1)(n+1)^3I(\epsilon^{-1})d}\le \frac{1}{2(n+1)^2}.
\end{align}
Therefore, using (\ref{new1.2}) and (\ref{new1.3}) we have
\begin{align*}
\dfrac{uL}{dA}&\le \dfrac{\binom{L+n}{n}L}{d^{n+1}\binom{\frac{L}{d}}{n+1}}
=\frac{L\cdot (L+1)\cdots (L+n)}{1\cdot 2\cdots n}\Big /\frac{(L-nd)\cdot (L-(n-1)d)\cdots L}{1\cdot 2\cdots (n+1) }\\
&=(n+1)\prod_{i=1}^n\dfrac{L+i}{L-(n-i+1)d}<(n+1)\bigl (\dfrac{L}{L-(n+1)d}\bigl)^n\\
&=(n+1)\left (1+\dfrac{(n+1)d}{L-(n+1)d}\right )^n<(n+1)\left (1+\frac{(n+1)^2d}{2(N-n+1)(n+1)^3I(\epsilon^{-1})d}\right )\\
&\le (n+1)+\dfrac{(n+1)^3d}{2(n+1)^3(N-n+1)\epsilon^{-1}}\le n+1+\dfrac{\epsilon}{2(N-n+1)}.
\end{align*}
Then we have
\begin{align}\label{3.17}
\begin{split}
\dfrac{tuL}{dAs}&\le (1+\dfrac{\epsilon}{3(n+1)(N-n+1)})(n+1+\dfrac{\epsilon}{2(N-n+1)})\\
&\le n+1+\dfrac{\epsilon}{2(N-n+1)}+\dfrac{\epsilon}{3(N-n+1)}+\dfrac{\epsilon}{6(N-n+1)}\\
&=n+1+\frac{\epsilon}{N-n+1}.
\end{split}
\end{align}

Combining (\ref{3.11}) and (\ref{3.17}), we get
\begin{align}
\label{3.18}
\left (q-(N-n+1)(n+1)-\epsilon\right )T_f(r)\le \sum_{i=1}^q\frac{1}{d}N^{[tu-1]}(r,f^*Q_i)+o(T_f(r)).
\end{align}
Here we note that:
\begin{itemize}
\item $L:=(n+1)d+2(N-n+1)(n+1)^3I(\epsilon^{-1})d$,
\item $p_0:=\left [\dfrac{B-1}{\log (1+\frac{\epsilon}{3(n+1)(N-n+1)})}\right ]^2\le \left [\dfrac{\binom{L+n}{n}(\binom{L+n}{n}-1)\binom{q}{n}-1}{\log (1+\frac{\epsilon}{3(n+1)(N-n+1)})}\right ]^2,$
\item $tu-1\le\binom{L+n}{n}\binom{B+p}{B-1}-1\le \binom{L+n}{n}p^{B-1}-1\le \binom{L+n}{n}p_0^{\binom{L+n}{n}(\binom{L+n}{n}-1)\binom{q}{n}-2}-1=L_0.$
\end{itemize}
By these estimates and by (\ref{3.18}), we obtain
\begin{align*}
||\ (q-(N-n+1)(n+1)-\epsilon)T_f(r)\le \sum_{i=1}^q\frac{1}{d}N^{[L_0]}(r,f^*Q_i)+o(T_f(r)).
\end{align*}
The theorem is proved.
\end{proof}

\noindent
{\bf Acknowledgements.} This research is funded by Vietnam National Foundation for Science and Technology Development (NAFOSTED) under grant number 101.04-2015.03.

\vskip0.2cm
{\footnotesize 
\noindent
{\sc Si Duc Quang}\\
$^1$ Department of Mathematics, Hanoi National University of Education,\\
136-Xuan Thuy, Cau Giay, Hanoi, Vietnam.\\
$^2$ Thang Long Institute of Mathematics and Applied Sciences,\\
Nghiem Xuan Yem, Hoang Mai, HaNoi, Vietnam.\\
\textit{E-mail}: quangsd@hnue.edu.vn


\begin{thebibliography}{HD}

\bibitem{AP} T. T. H. An and H. T. Phuong, \textit{An explicit estimate on multiplicity truncation in the Second Main Theorem for holomorphic curves encountering hypersurfaces in general position in projective space,} Houston J. Math. \textbf{35} (2009) 775--786.

\bibitem{Ca} H. Cartan, \textit{Sur les z\'{e}roes des combinaisons lin\'{e}aries de p fonctions holomorphes donn\'{e}es,} Mathematica \textbf{7} (1933) 80--103.

\bibitem{CRY01} Z. Chen, M. Ru and Q. Yan, \textit{The truncated second main theorem and uniqueness theorems}, Sci. China Math. \textbf{53} (2010) 605--616.

\bibitem{CZ} P. Corvaja and U. Zannier, \textit{On a general Thue's equation,} Amer. J. Math. \textbf{126} (2004) 1033--1055.

\bibitem{DT} G. Dethloff and T. V. Tan, A second main theorem for moving hypersurface targets, Houston J. Math. 37 (2011) 79-111.

\bibitem{Fu} H. Fujimoto, \textit{Non-integrated defect relation for meromorphic maps of complete K\"ahler manifolds into  $\mathbb P^{N_1}(\mathbb C)\times ... \times \mathbb P^{N_k}(\mathbb C),$} Japanese J. Math. \textbf{11} 233--264 (1985).

\bibitem{LG} L. Giang, \textit{An explicit estimate on multiplicity truncation in the degenerated second main theorem,} Houston J. Math. \textbf{42} (2016) 447--462.

\bibitem{N} R. Nevanlinna, \textit{Einige Eideutigkeitss\"atze in der Theorie der meromorphen Funktionen}, Acta. Math. \textbf{48} (1926) 367--391.

\bibitem{Noc83} E. I. Nochka, \textit{On the theory of meromorphic functions}, Sov. Math. Dokl. \textbf{27} (1983) 377--381.

\bibitem{NO} Noguchi, J., Ochiai, T, Introduction to Geometric Function Theory in Several Complex Variables, Trans. Math. Monogr. 80, Amer. Math. Soc., Providence, Rhode Island, 1990.

\bibitem{No05} J. Noguchi, \textit{A note on entire pseudo-holomorphic curves and the proof of Cartan-Nochka's theorem}, Kodai Math. J. \textbf{28} 336--346 (2005).

\bibitem{Q12} S. D. Quang, \textit{Second main theorem and unicity theorem for meromorphic mappings sharing moving hypersurfaces regardless of multiplicity}, Bull. Sci. Math. \textbf{136} (2012), 399--412.

\bibitem{Q14} S. D. Quang, \textit{Second main theorems and unicity of meromorphic mappings with moving hypersurfaces}, Bull. Math. Soc. Sci. Math. Roumanie \textbf{57} (105) No. 3 (2014) 279--300.

\bibitem{Q16} S. D. Quang, \textit{Degeneracy second main theorems for meromorphic mappings into projective varieties with hypersurfaces}, 	arXiv:1610.03951 [math.CV].

\bibitem{QA} S. D. Quang and D. P . An, \textit{Second main theorem and unicity of meromorphic mappings for hypersurfaces in projective varieties,} to appear in Acta Mathematica Vietnamica (2016).

\bibitem{Ru97} M. Ru, \textit{On the general form of second main theorem}, Trans. Amer. Math. Soc. \textbf{49} (1997), 5093-5105.

\bibitem{Ru04} M. Ru, \textit{A defect relation for holomorphic curves intersecting hypersurfaces,} Amer. J. Math. \textbf{126} (2004) 215--226.

\bibitem{RS1} M. Ru and W. Stoll, \textit{The second main theorem for moving targets}, J. Geom. Anal. \textbf{1} (1991), 99--138.

\bibitem{RS2} M. Ru and W. Stoll, \textit{The Cartan conjecture for moving targets}, Proc. Sympos. Pure Math. \textbf{52} (1991), 99--138.

\bibitem{Sh90} M. Shirosaki, \textit{Another proof of the defect relation for moving targets},
T\^ohoku Math. J. {\bf 43} (1991), 355--360.

\bibitem{TQ} D. D. Thai and S. D. Quang, \textit{Cartan-Nochka theorems with truncated counting function for moving targets,} Acta Math. Vietnamica, \textbf{35} (2010), 173--197.

\end{thebibliography}
\end{document}